\newcommand{\vertiii}[1]{{\left\vert\kern-0.25ex\left\vert\kern-0.25ex\left\vert #1
    \right\vert\kern-0.25ex\right\vert\kern-0.25ex\right\vert}}
\def\pasdegrille{\let\grille = \pasgrille}
\def\aat#1#2#3{
\divide \dimen1 by 48 \dimen3=\dimen1 \multiply \dimen1 by #1
\advance \dimen1 by -\dimen3 \divide \dimen1 by 101 \multiply
\dimen1 by 100 \divide \dimen2 by \count11 \multiply \dimen2 by #2
\setbox0=\hbox{#3}\ht0=0pt\dp0=0pt
  \rlap{\kern\dimen1 \vbox to0pt{\kern-\dimen2\box0\vss}}\dimen1= \wd1
\dimen2=\ht1}
\def\pasgrille{
\count12= \dimen1 \divide \count12 by 50 \divide \dimen2 by \count12
\count11 =\dimen2 \ \divide \dimen1 by 48
\setlength{\unitlength}{\dimen1} \smash{\rlap{\ }} \dimen1= \wd1
\dimen2=\ht1 }
\def\grille{
\count12= \dimen1 \divide \count12 by 50 \divide \dimen2 by \count12
\count11 =\dimen2 \ \divide \dimen1 by 48
\setlength{\unitlength}{\dimen1}
\smash{\rlap{\graphpaper[1](0,0)(50, \count11)}} \dimen1= \wd1
\dimen2=\ht1 }
\newcommand{\Z}{{\mathbb{Z}}}
\newcommand{\R}{{\mathbb{R}}}
\newcommand{\eps}{{\varepsilon}}
\newcommand{\g}{{ g}}
\newcommand{\titau}{\tilde{\tau}}
\newcommand{\ld}{\lambda}
\newcommand{\beq}{\begin{equation}}
\newcommand{\eeq}{\end{equation}}
\theoremstyle{plain}
\newtheorem{theorem}{Theorem}
\newtheorem{proposition}[theorem]{Proposition}
\newtheorem{lemma}[theorem]{Lemma}
\theoremstyle{definition}
\newtheorem{remark}[theorem]{Remark}
\newcounter{smalllist}
\numberwithin{equation}{section}
\numberwithin{theorem}{section}
\begin{document}

\title[Bilinear Kakeya-Nikodym]
{\Large Bilinear Kakeya-Nikodym averages of eigenfunctions on compact Riemannian surfaces}
\author{Changxing Miao}
\address{Institute of Applied Physics and Computational Mathematics\\
Beijing, 100088\\
China}\email{miao\_changxing@iapcm.ac.cn }
\author{Christopher D. Sogge}
\address{Department of Mathematics\\
Johns Hopkins University\\
Baltimore, MD 21218, USA}
\email{sogge@jhu.edu }
\author{Yakun Xi}
\address{Department of Mathematics\\
Johns Hopkins University\\
Baltimore, MD 21218, USA}
\email{ykxi@math.jhu.edu}
\author{Jianwei Yang}
\address{LAGA (UMR 7539)\\ Institut Galil\'ee\\ Universit\'e Paris 13\\ Sorbonne Paris Cit\'e\\  \^Ile-de-France.}
\address{
Beijing International Center for Mathematical Research\\ 
Peking University\\
Beijing 100871, China}
\email{geewey\_young@pku.edu.cn}
\thanks{The first author was supported in part  by the NSF of China. The second author was supported  in part by the NSF grants DMS-1361476. The fourth author was supported by ERC Advanced Grant No. 291214BLOWDISOL}
\begin{abstract}
We obtain an improvement of the bilinear estimates of Burq, G\'erard and Tzvetkov~\cite{BurqGerardTzvetkov:Invention}
in the spirit of the refined Kakeya-Nikodym estimates \cite{Blair Sogge:refine} of Blair and the second author.  We do this
by using microlocal techniques and a bilinear version of H\"ormander's oscillatory integral theorem in \cite{Hormander:FLp}.
\end{abstract}

\maketitle


%
%

\section{Introduction}
Let $(M,\g)$ be a two-dimensional
compact boundaryless Riemannian manifold
with Laplacian $\Delta_\g$.
If $e_\lambda$ are the associated eigenfunctions of $\sqrt{-\Delta_\g}$
such that $-\Delta_\g e_\lambda=\lambda^2 e_\lambda$, then it is well known that
\begin{equation}
  \label{eq:Sogge classical estimate}
  \|e_\lambda\|_{L^4(M)}\leq C\,\lambda^{\frac18}\,\|e_\lambda\|_{L^2(M)},
\end{equation}
which was proved in \cite{Cef} using
approximate spectral projectors
$\chi_\ld=\chi(\lambda-\sqrt{-\Delta_\g})$ and showing
\begin{equation}
  \label{eq:spectral est}
  \|\chi_\lambda \,f\|_{L^4(M)}\le C\,\lambda^{\frac18}\,\|f\|_{L^2(M)}.
\end{equation}

If $0<\lambda\le \mu$ and
$e_\lambda,e_\mu$ are two associated
eigenfunctions of  $\sqrt{-\Delta_\g}$ as above,
Burq et al \cite{BurqGerardTzvetkov:Invention} proved the following bilinear $L^2$-refinement
of \eqref{eq:Sogge classical estimate}
\begin{equation}\label{eq:BGT estimate}
\|e_\lambda e_\mu\|_{L^2(M)}
\le C\,\lambda^\frac14\, \|e_\lambda\|_{L^2(M)}\|e_\mu\|_{L^2(M)},
\end{equation}
as a consequence of a more general bilinear estimate on the reproducing operators
\begin{equation}\label{eq:BGT estimate-1}
\bigl\|\chi_\lambda f\chi_\mu g\bigr\|_{L^2(M)}
\le C\,\ld^\frac14\,\|f\|_{L^2(M)}\|g\|_{L^2(M)}.
\end{equation}
The bilinear estimate \eqref{eq:BGT estimate} plays an important role
in the theory of nonlinear Schr\"odinger equations on compact Riemannian surfaces
and it is sharp
in the case when $M=\mathbb S^2$ endowed with the canonical metric
and $e_\lambda(x)=h_p(x)$, $e_\mu(x)=h_q(x)$ are highest weight
spherical harmonic functions of degree $p$ and $q$, concentrating along the equator
\[\bigl\{x=(x_1,x_2,x_3):x_1^2+x_2^2=1,x_3=0\bigr\}\]
 with $\lambda^2=p(p+1)$, $\mu^2=q(q+1)$.
Indeed, one may take
$h_k(x)=(x_1+ix_2)^k$
to see $\|h_k\|_2\approx k^{-1/4}$ by direct computation.

\medskip

In Section \ref{sect:generic failure},
we will construct a generic example to show
the optimality of \eqref{eq:BGT estimate-1}
and exhibit that the mechanism responsible for the optimality
seems to be the existence
of eigenfunctions concentrating along a tubular neighborhood
of a segment of a geodesic.
As observed in \cite{Sogge:93book}, \eqref{eq:spectral est}
is saturated by constructing
an oscillatory integral which highly concentrates along a geodesic.
The dynamical behavior of geodesic flows on $M$ accounts for
the analytical properties of eigenfunctions
exhibits the transference of mathematical
theory from classical mechanics to quantum mechanics
(see \cite{Sogge:hangzhou-book}).
\medskip

That the eigenfunctions concentrating
along geodesics yield sharp spectral projector
inequalities leads naturally to
the refinement of \eqref{eq:Sogge classical estimate} in \cite{Sogge:Tohoku},
where it is proved for an $L^2$ normalized eigenfunction $e_\ld$,
its $L^4$-norm is essentially
bounded by a power of
\begin{equation}
\sup_{\gamma\in\it\Pi}\frac{1}{|\,T_{\ld^{-1/2}}(\gamma)\,|}
\int_{T_{\ld^{-1/2}}(\gamma)}|\,e_\ld(x)\,|^2 \, dx,
\end{equation}
where  $\it\Pi$ denotes the collection of all unit geodesics
and  $T_\delta(\gamma)$ is a tubular $\delta$-neighborhood about the geodesic
$\gamma$. This fact motivates the Kakeya-Nikodym maximal average
phenomena measuring the size and concentration of eigenfunctions.

This result was refined by Blair and Sogge \cite{Blair Sogge:refine}, where the authors proved
for every $0<\eps\leq 1/2$, there is a $C=C(\eps,M)$ so that
\begin{equation}
  \label{eq:refined}
  \|e_\ld\|_{L^4(M)}\leq C \,\ld^\frac18\,\|e_\ld\|^\frac12_{L^2(M)}\times
  \biggl(\sup_{\gamma\in\it\Pi}
  \int_{T_{\ld^{-\frac12+\eps}}(\gamma)}|e_\ld(x)|^2dx\biggr)^\frac14.
\end{equation}
We shall assume throughout that our eigenfunctions are $L^2$-normalized, but we shall
formulate our main estimates as in \eqref{eq:refined} to emphasize the difference between
the norms over all of $M$ and over shrinking tubes.

As mentioned in \cite{Blair Sogge:refine} it would be interesting to see whether the $\eps$-loss in \eqref{eq:refined} can be eliminated.
Further results for higher dimensions are in \cite{BSRef}.

Inspired by \cite{Sogge:Tohoku}, we are
interested in the bilinear version of the main result in \cite{Sogge:Tohoku},
namely, searching for the essentially appropriate control of $\|e_\lambda e_\mu\|_2$
by means of Kakeya-Nikodym maximal averages.
In fact, we will obtain a better result by establishing the microlocal
version of Kakeya-Nikodym average in the spirit of \cite{Blair Sogge:refine},
and our main result reads
\begin{theorem}
  \label{theorem:main}
  Assume $0<\ld\leq \mu$ and $e_\ld$, $e_\mu$ are two eigenfunctions of $\sqrt{-\Delta_{\g}}$
  associated to the frequencies $\ld $ and $\mu$ respectively.
  Then for every $0<\eps\leq \frac12$, we have a $C_{\eps}>0$ such that
  \begin{equation}
    \label{eq:main result1}
    \|e_\ld e_\mu\|_{L^2(M)}\leq C_{\eps}\ld^{\frac{\eps}{2}}\|e_\mu\|_{L^2(M)} \vertiii{e_\ld}_{KN(\ld,\eps)},
  \end{equation}
  and
    \begin{equation}
    \label{eq:main result2}
    \|e_\ld e_\mu\|_{L^2(M)}\leq C_{\eps}\ld^{\frac{\eps}{2}}\|e_\ld\|_{L^2(M)} \vertiii{e_\mu}_{KN(\ld,\eps)},
  \end{equation}
  where the Kakeya-Nikodym norm is defined by
  \begin{equation}
    \label{eq:define the KN}
    \vertiii{f} _{KN(\ld,\eps)}=
    \biggl(\sup_{\gamma\in\it\Pi}\ld^{\frac12-\eps}\int_{ T_{\ld^{-\frac12+\eps}}(\gamma)}|f(x)|^2\,dx\biggr)^\frac12.
  \end{equation}
\end{theorem}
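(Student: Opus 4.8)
The plan is to deduce both \eqref{eq:main result1} and \eqref{eq:main result2} from a microlocalized bilinear oscillatory integral inequality, running the argument of \cite{Blair Sogge:refine} in a bilinear setting but with a bilinear variant of the oscillatory integral theorem of \cite{Hormander:FLp} in place of the Carleson--Sj\"olin $L^4$ bound. I shall treat \eqref{eq:main result1} in detail; \eqref{eq:main result2} will follow by the same reasoning after interchanging the roles of $\lambda$ and $\mu$, the only change being that $e_\mu$ must then be microlocalized at the finer angular scale $\lambda^{1/2-\eps}\mu^{-1}$, the scale that produces wave packets of spatial width $\lambda^{-1/2+\eps}$. After a smooth partition of unity on $M$ and passage to a coordinate chart, I would insert $e_\lambda=\chi_\lambda e_\lambda$, $e_\mu=\chi_\mu e_\mu$ and use the Hadamard parametrix to write $\chi_\lambda$, modulo a negligible error, as a finite sum of oscillatory integral operators $\lambda^{\frac12}\int e^{i\lambda\varphi(x,y)}a_\lambda(x,y)f(y)\,dy$ with $\varphi$ satisfying the curvature (Carleson--Sj\"olin) hypotheses of \cite{Hormander:FLp}, and likewise for $\chi_\mu$. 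I then decompose the sphere of directions into $\sim\lambda^{1/2-\eps}$ arcs $\theta$ of length $\lambda^{-1/2+\eps}$, split the amplitudes accordingly to get $\chi_\lambda=\sum_\theta\chi_\lambda^\theta$ and $\chi_\mu=\sum_\theta\chi_\mu^\theta$, and record two facts: each $\chi_\lambda^\theta e_\lambda$ (resp. $\chi_\mu^\theta e_\mu$) is, up to an $O(\lambda^{-N})$ error, concentrated in a tube $\mathcal T_\theta$ of width $\lambda^{-1/2+\eps}$ about a unit geodesic in the direction $\theta$, and a localization lemma --- proved by an Egorov/finite propagation speed argument together with the pseudolocality of $\chi_\lambda^\theta$, as in \cite{Blair Sogge:refine} --- gives
\[
\|\chi_\lambda^\theta e_\lambda\|_{L^2(M)}^2\ \lesssim\ \int_{\mathcal T_\theta}|e_\lambda|^2\,dx\ +\ O(\lambda^{-N})\|e_\lambda\|_{L^2(M)}^2\ \lesssim\ \lambda^{-\frac12+\eps}\,\vertiii{e_\lambda}_{KN(\lambda,\eps)}^2,
\]
while the almost orthogonality of the $\chi_\mu^\theta$ gives $\sum_\theta\|\chi_\mu^\theta e_\mu\|_{L^2(M)}^2\lesssim\|e_\mu\|_{L^2(M)}^2$.

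I would then expand $e_\lambda e_\mu=\sum_{\theta,\theta'}(\chi_\lambda^\theta e_\lambda)(\chi_\mu^{\theta'}e_\mu)$ and split the double sum into a \emph{transverse} part, where the angular distance $d(\theta,\theta')$ from $\theta$ to $\pm\theta'$ exceeds $\lambda^{-1/2+\eps}$, and a \emph{parallel} part, where $\theta'$ lies within $O(\lambda^{-1/2+\eps})$ of $\pm\theta$. In either part the summands have boundedly overlapping Fourier supports --- in the transverse part because the map $(\theta,\theta')\mapsto\lambda\widehat\theta+\mu\widehat{\theta'}$ has Jacobian $\sim\lambda\mu\,d(\theta,\theta')$, hence is a uniform local diffeomorphism once $d(\theta,\theta')$ is bounded below, and in the parallel part because the supports cluster near the circle of radius $\mu\pm\lambda$ with bounded overlap in the angular variable --- so in each part it suffices to estimate the $\ell^2$-sum of the $L^2$ norms of the individual products. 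For the transverse part the essential ingredient is the bilinear analogue of the theorem of \cite{Hormander:FLp}: for two such oscillatory integral operators at frequencies $\lambda\le\mu$ whose amplitudes are supported in $d$-transverse cones of directions one should have, uniformly in $\lambda,\mu$,
\[
\bigl\|(\chi_\lambda^\theta e_\lambda)(\chi_\mu^{\theta'}e_\mu)\bigr\|_{L^2(M)}\ \lesssim\ d(\theta,\theta')^{-\frac12}\,\|\chi_\lambda^\theta e_\lambda\|_{L^2(M)}\,\|\chi_\mu^{\theta'}e_\mu\|_{L^2(M)}.
\]
Decomposing dyadically in $d(\theta,\theta')\sim 2^j\lambda^{-1/2+\eps}$, using the localization lemma for $\|\chi_\lambda^\theta e_\lambda\|_{L^2}^2$, the fact that for fixed $\theta'$ there are $\lesssim 2^j$ arcs $\theta$ at distance $\sim 2^j\lambda^{-1/2+\eps}$, and finally $\sum_{\theta'}\|\chi_\mu^{\theta'}e_\mu\|_{L^2}^2\lesssim\|e_\mu\|_{L^2}^2$, each dyadic block contributes $\lesssim\vertiii{e_\lambda}_{KN(\lambda,\eps)}^2\|e_\mu\|_{L^2}^2$ to the square of the transverse part, and summing the $\sim\log\lambda$ blocks (absorbing the logarithm into $\lambda^{\eps/2}$) bounds the transverse part by $\lambda^{\eps/2}\vertiii{e_\lambda}_{KN(\lambda,\eps)}\|e_\mu\|_{L^2}$.

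For the parallel part there are only $O(1)$ arcs $\theta$ per arc $\theta'$, and the two factors are supported on essentially the same tube of width $\lambda^{-1/2+\eps}$; the contribution is then bounded by $\lesssim\lambda^{\eps/2}\vertiii{e_\lambda}_{KN(\lambda,\eps)}\|e_\mu\|_{L^2}$ by a direct argument using the standard pointwise bounds for microlocalized eigenfunctions concentrated on tubes together with the $L^2$-orthogonality of the pieces $\chi_\mu^{\theta'}e_\mu$ (this is the part closest in spirit to the tube estimates of \cite{Blair Sogge:refine}; a crude use of Bernstein's inequality already gives $\lambda^{O(\eps)}$ in place of $\lambda^{\eps/2}$, and one refines the $L^\infty$ bound on the single $\lambda^{-1/2+\eps}$-piece of $e_\lambda$ to recover the stated exponent). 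Adding the two parts --- and, if needed, rerunning the argument with a slightly smaller $\eps$ to absorb the various $\lambda^{O(\eps)}$ losses --- yields \eqref{eq:main result1}, and \eqref{eq:main result2} follows in the same way. The step I expect to be the main obstacle is the bilinear oscillatory integral estimate displayed above: proving it with the sharp transversality gain $d^{-1/2}$ and with a constant uniform in the pair of frequencies $\lambda\le\mu$ is precisely the bilinear refinement of \cite{Hormander:FLp}, and, as in bilinear restriction theory, it requires a delicate analysis of how the curvatures of the two cones of directions interact with their angular separation; a secondary technical point is the localization lemma over unit-length geodesics, where one must control the spreading of the microlocalized mass under the wave flow while keeping all error terms $O(\lambda^{-N})$.
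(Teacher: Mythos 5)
Your high-level strategy matches the paper's: microlocalize both eigenfunctions into angular wave packets at scale $\lambda^{-1/2+\eps}$, split the resulting double sum by angular separation, handle the near-diagonal part directly and the transverse part via a bilinear oscillatory-integral estimate with a $\theta^{-1/2}$ gain, then sum dyadically over the separation and absorb the logarithmic loss into $\lambda^{\eps/2}$. That is indeed how the proof is organized. But three points in your execution diverge from (and fall short of) what the paper does, and they matter.

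First, for the parallel/diagonal block the paper does not resort to pointwise bounds or Bernstein: it simply applies the Burq--G\'erard--Tzvetkov bilinear estimate $\|\chi_\lambda f\,\chi_\mu g\|_2\lesssim\lambda^{1/4}\|f\|_2\|g\|_2$ to each product $\chi_\lambda Q^\nu_{\theta_0}f\cdot\chi_\mu P^\upsilon_{\theta_0}g$ with $|\nu-\upsilon|\le M$, then uses $L^2$-almost-orthogonality of the pieces. This directly gives $\lambda^{\eps/2}\|g\|_2\,\vertiii{f}_{MKN(\lambda,\eps)}$ with the correct exponent; your ``crude Bernstein, then refine'' plan is not worked out and it is not clear it recovers $\lambda^{\eps/2}$. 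Second, for the transverse part the paper performs a \emph{second} microlocalization at the wider scale $\theta_\ell=2^\ell\theta_0$: after grouping thin tubes into tubes of width $\theta_\ell$, the bilinear oscillatory operator $T^{\sigma_1,\sigma_2}_{\lambda,\mu,\theta_\ell}$ is applied with bounded separation $N_0\le|\sigma_1-\sigma_2|\le N_1$, and the estimate proved (Proposition \ref{prop:bilinear estimate}) is an operator bound $\|T^{\sigma_1,\sigma_2}_{\lambda,\mu,\theta}F\|_2\lesssim\theta^{-1/2}\|F\|_2$ for a general $F(y,z)$, not just for tensor products. Working directly at scale $\theta_0$ with a variable-separation product estimate is conceptually parallel, but the regrouping step is what makes the Schur-test argument in Section 5 (via the Carleson--Sj\"olin condition and the change of variables $(y_1,z_1)\mapsto(\tau,\tau')$ from Burq--G\'erard--Tzvetkov) actually go through at a uniform scale. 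Third, and most importantly: your treatment of \eqref{eq:main result2}, microlocalizing $e_\mu$ at the finer angular scale $\lambda^{1/2-\eps}\mu^{-1}$, is a misreading of what is needed. The paper uses the \emph{same} scale $\theta_0=\lambda^{-1/2+\eps}$ for both $Q^\nu_{\theta_0}$ and $P^\upsilon_{\theta_0}$; by construction these symbols live on tubes of spatial width $\sim\theta_0$ in \emph{both} position and angle regardless of the underlying frequency, and the uncertainty principle is satisfied since $\theta_0\ge\mu^{-1/2}$. Estimate \eqref{eq:main result2} then follows from exactly the same chain of inequalities by interchanging which of the two factors you take the supremum over and which you sum in $\ell^2$ --- no change of scale is required, and changing the scale would generate $\sim\mu/\lambda^{1/2-\eps}$ pieces for $e_\mu$ and break the symmetric counting you need.
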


Note also that we can reformulate our main estimates as follows
\begin{equation}\tag{1.7$'$}\label{1.7'}
\|e_\lambda e_\mu\|_{L^2(M)} \le C_\varepsilon \lambda^{\frac14} \|e_\mu\|_{L^2(M)}\times
\Bigl( \, \sup_{\gamma\in \varPi}  \int_{T_{\ld^{-\frac12+\eps}}(\gamma)} |e_\lambda|^2 \, dx\Bigr)^{\frac12},
\end{equation}
and
\begin{equation} \label{1.8'} \tag{1.8$'$}
\|e_\lambda e_\mu\|_{L^2(M)} \le C_\varepsilon \lambda^{\frac14} \|e_\lambda\|_{L^2(M)}\times
\Bigl(\, \sup_{\gamma\in \varPi}  \int_{T_{\ld^{-\frac12+\eps}}(\gamma)} |e_\mu|^2 \, dx\Bigr)^{\frac12},
\end{equation}
both of which are bilinear variants of \eqref{eq:refined}.
Also, by taking the geometric means of \eqref{eq:main result1} and \eqref{eq:main result2} one of course has that
 \begin{equation}
   \label{eq:main result3}
   \|e_\ld e_\mu\|_{L^2(M)}\leq C_{\eps}\ld^{\frac{\eps}{2}}\|e_\ld\|_{L^2(M)}^\frac12\|e_\mu\|_{L^2(M)}^\frac12 \vertiii{e_\ld}_{KN(\ld,\eps)}^\frac12 \vertiii{e_\mu}_{KN(\ld,\eps)}^\frac12.
\end{equation}
  
Note that it is the geodesic tubes corresponding to the lower frequency
that accounts for the optimal upper bound of $\|e_\ld e_\mu\|_2$.   We point out that in \eqref{eq:main result2} one cannot take
the $KN(\mu,\varepsilon)$-norm of $e_\mu$.  For on ${\mathbb T}^n\approx (-\pi,\pi]^n$ if $e_\lambda=e^{ij\cdot x}$, $|j|=\lambda$,
and $e_\mu=e^{ik\cdot x}$, $|k|=\mu$, the analog of \eqref{eq:main result2} involving $ \vertiii{e_\mu}_{KN(\mu,\eps)}$ is
obviously false for small $\varepsilon>0$  if $\mu \gg \lambda$.

Note also that if $e_\mu$ is replaced by a subsequence, $e_{\mu_{j_k}}$ of quantum ergodic eigenfunctions (see \cite{Sogge:hangzhou-book})
then \eqref{1.8'} implies that $\|e_\lambda e_{\mu_{j_k}}\|_{L^2(M)}\to 1$ as $\mu_{j_k}\to \infty$.  This is another reason why it would be
interesting to know whether the analog of \eqref{1.8'} is valid with $\varepsilon=0$ there.

This paper is organized as follows.
In Section 2, we construct an example to show
the sharpness of \eqref{eq:BGT estimate-1}.
In Section 3, we introduce some basic preliminaries
and reduce the proof of Theorem \ref{theorem:main}
to the situation, where the strategy in \cite{Blair Sogge:refine}
can be applied.
In Section 4, we employ the orthogonality argument
to conclude the theorem by assuming
a specific bilinear oscillatory
integral inequality.
Finally, we prove this inequality
in Section 5 based on the instrument in
\cite{BurqGerardTzvetkov:Invention}, which provides
a bilinear version of H\"ormander's oscillatory integral theorem \cite{Hormander:FLp}.
We shall assume $0<\ld\le \mu$
throughout this paper.


%
%

\section{A generic example}\label{sect:generic failure}
In this section, we shall  construct an example
showing 
the optimality of  the universal bounds \eqref{eq:BGT estimate-1}.
We will use  approximate spectral projectors
 $\chi_\lambda$ and $\chi_\mu$
which reproduce eigenfunctions and
can be written as  proper
Fourier integral operators
up to a smooth error.
\medskip

Without loss of generality, we may assume the injectivity radius of $M$ is sufficiently large. Take a Schwartz function $\chi\in \mathcal S(\mathbb R)$
with $\chi(0)=1$ and $\widehat \chi $ supported
in $[1,2]$, so that
the spectral projectors are represented by
$$\chi_\lambda f(x)=\lambda^{1/2}\mathcal T_\lambda f(x)+\mathcal R_\lambda f(x),\;
\chi_\mu g(x)=\mu^{1/2}\mathcal T_\mu g(x)+\mathcal R_\mu g(x),$$
where
\[\|\mathcal R_\lambda f\|_{L^\infty(M)}\leq C_N\lambda^{-N}\|f\|_{L^1(M)},\;
\|\mathcal R_\mu g\|_{L^\infty(M)}\leq C_N\mu^{-N} \|g\|_{L^1(M)},\]
for all $N=1,2,\ldots$ , and the main terms read
\begin{align}
\mathcal T_\lambda f(x)=&\int_{M}e^{i\lambda d_\g(x,y)}a(x,y,\lambda)\,f(y)\,dy,\\
\mathcal T_\mu g(x)=&\int_{M}e^{i\mu d_\g(x,z)}a(x,z,\mu)\,g(z)\,dz.
\end{align}
Here $d_\g(x,y)$ is the geodesic distance between $x,y\in M$, and
the amplitudes $a(x,y,\ld),a(x,z,\mu)\in C^\infty$
have the following property
\[|\partial^\alpha_{x,\,y}a(x,y,\lambda)|
+|\partial^\alpha_{x,\,z}a(x,z,\mu)|
\leq C_\alpha,\quad\text{for all}\;\alpha.\]
Moreover $a(x,y,\lambda)=0$ if
$d_\g(x,y)\not\in(1,2)$ and likewise for $a(x,z,\mu)$.  (See \cite[Lemma 5.1.3]{Sogge:93book}.)

\medskip

After applying a partition of unity, for small $\delta$ fixed,
we may fix three points $x_0,y_0,z_0\in M$ with $1\le d_\g(x_0,y_0)\le 2,\ 1\le d_\g(x_0,z_0)\le 2$, and assume that $a(x,y,\lambda)$ vanishes outside the region
$\{(x,y)|\,x\in B(x_0,\delta), \,y\in B(y_0,\delta)\}$, $a(x,z,\mu)$ vanishes outside the region
$\{(x,z)|\,x\in B(x_0,\delta),\,z\in B(z_0,\delta)\}$.
To see the sharpness of \eqref{eq:BGT estimate-1},
we will prove the following result.
\begin{proposition}
  \label{prop:example}
  There exist $f$ and $g$ such that for some $C>0$,
  \begin{equation}
    \label{eq:example}
    \bigl\|\mathcal T_\lambda f\,\mathcal T_\mu g\bigr\|_{L^2}
    \geq C \,\ld^{-1/4}\mu^{-1/2}\|f\|_{L^2}\|g\|_{L^2}.
  \end{equation}
\end{proposition}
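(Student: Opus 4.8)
The plan is to show that the lower bound \eqref{eq:example} is realized by the geodesic (Gaussian beam) concentration mechanism alluded to in the introduction: I will choose $f$ and $g$ so that $\mathcal T_\lambda f$ and $\mathcal T_\mu g$ are, up to constants, Gaussian beams of transverse widths $\lambda^{-1/2}$ and $\mu^{-1/2}$ concentrating along one and the \emph{same} geodesic through $x_0$. Concretely, let $\gamma_0$ be a unit-speed geodesic through $x_0$ (the continuation of the minimizing geodesic from $x_0$ to $y_0$, unique since the injectivity radius is large), parametrized so $\gamma_0(0)=x_0$, and assume — as we may arrange when carrying out the reduction preceding this proposition — that $y_0=\gamma_0(\ell_1)$ and $z_0=\gamma_0(\ell_2)$ lie on $\gamma_0$, on the same side of $x_0$, with $\ell_1,\ell_2\in[1,2]$, and, shrinking $\delta$, that $|a(x,y,\lambda)|\ge c_0>0$ on $B(x_0,\delta)\times B(y_0,\delta)$ and $|a(x,z,\mu)|\ge c_0>0$ on $B(x_0,\delta)\times B(z_0,\delta)$, uniformly in $\lambda,\mu$ large (the principal part of the amplitude in \cite[Lemma 5.1.3]{Sogge:93book} is elliptic on its support). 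Note $d_\g(\gamma_0(s),\gamma_0(s'))=|s-s'|$ in the relevant range.

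In Fermi coordinates $(s,t)$ along $\gamma_0$ (so $\gamma_0=\{t=0\}$ and $t$ is signed normal distance), put, for nonnegative cutoffs $\beta,\zeta\in C^\infty_c$ equal to $1$ near $0$ and with sufficiently small supports,
\[
f(s,t)=\lambda^{1/4}\,e^{-i\lambda s}\,\beta(\lambda^{1/2}t)\,\zeta(s-\ell_1),\qquad
g(s,t)=\mu^{1/4}\,e^{-i\mu s}\,\beta(\mu^{1/2}t)\,\zeta(s-\ell_2).
\]
Then $\supp f\subset B(y_0,\delta)$, $\supp g\subset B(z_0,\delta)$, and $\|f\|_{L^2}\sim\|g\|_{L^2}\sim 1$. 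The crux of the proof is the pointwise lower bound
\[
|\mathcal T_\lambda f(x)|\ge c\,\lambda^{-1/4}\quad\text{on}\quad T_\lambda:=\{\,|s_x|<\delta'',\ |t_x|<\lambda^{-1/2}\,\},
\]
together with its counterpart $|\mathcal T_\mu g(x)|\ge c\,\mu^{-1/4}$ on $T_\mu:=\{|s_x|<\delta'',\ |t_x|<\mu^{-1/2}\}$, with $\delta''>0$ and $c>0$ independent of $\lambda,\mu$; here $T_\lambda$ is the $\lambda^{-1/2}$-tube about $\gamma_0\cap B(x_0,\delta'')$, so $\mathcal T_\lambda f$ genuinely behaves like an $L^2$-normalized eigenfunction concentrated in such a tube.

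To prove the key bound, write the phase in the $y$-integral for $\mathcal T_\lambda f$ as $\Psi(x;s,t)=\lambda\bigl(d_\g(x,(s,t))-s\bigr)$. For $x=(s_x,0)\in\gamma_0$ with $s_x<s$ one has $d_\g(x,(s,0))=s-s_x$, so $\Psi\equiv-\lambda s_x$ on $\gamma_0$; moreover, since the geodesic from $x$ to $(s,0)$ is $\gamma_0$, whose tangent is $\partial_s$, the normal derivative $\partial_t d_\g(x,(s,t))\big|_{t=0}$ vanishes. Hence Taylor expansion gives $\Psi(x;s,t)=-\lambda s_x+O(\lambda t^2)$, and for $x=(s_x,t_x)$ slightly off $\gamma_0$ one picks up in addition only terms of size $O(\lambda|t_x|\,|t|)$ and $O(\lambda t_x^2)$ (the geodesic from $x$ to $(s,0)$ is tilted by $O(|t_x|)$ relative to $\gamma_0$, as $d_\g(x,(s,0))\gtrsim1$). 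On $T_\lambda$ and on $\supp\beta(\lambda^{1/2}\cdot)$ we have $|t|,|t_x|\lesssim\lambda^{-1/2}$, so all these remainders are $O(1)$, uniformly in $\lambda$; choosing the supports of $\beta,\zeta$ and the constant $\delta''$ small enough (depending only on $M$ and $\gamma_0$) keeps the total fluctuation of $\Psi$ — and, shrinking $\delta$, of $\arg a$ — below $1/10$. Since $\beta,\zeta\ge0$ and $|a|\ge c_0$, pulling a suitable unit-modulus constant out of the $(s,t)$-integral leaves an integral whose real part is $\gtrsim\lambda^{1/4}\int\beta(\lambda^{1/2}t)\,dt\int\zeta(s-\ell_1)\,ds$, whence $|\mathcal T_\lambda f(x)|\gtrsim\lambda^{1/4}\cdot\lambda^{-1/2}=\lambda^{-1/4}$ on $T_\lambda$; the estimate for $\mathcal T_\mu g$ is identical.

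Finally, since $\lambda\le\mu$ we have $\mu^{-1/2}\le\lambda^{-1/2}$, hence $T_\mu\subset T_\lambda$, so on $T_\mu$ — whose measure is $\sim\mu^{-1/2}$ — both lower bounds hold, giving
\[
\|\mathcal T_\lambda f\,\mathcal T_\mu g\|_{L^2(M)}\ge\Bigl(\int_{T_\mu}|\mathcal T_\lambda f|^2|\mathcal T_\mu g|^2\Bigr)^{1/2}\gtrsim\lambda^{-1/4}\mu^{-1/4}\,|T_\mu|^{1/2}\sim\lambda^{-1/4}\mu^{-1/4}\cdot\mu^{-1/4}=\lambda^{-1/4}\mu^{-1/2},
\]
which, together with $\|f\|_{L^2}\|g\|_{L^2}\sim1$, is exactly \eqref{eq:example}. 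I expect essentially all of the work to lie in the third paragraph: establishing the uniform-in-$\lambda$ lower bound for $\mathcal T_\lambda f$ on the \emph{whole} $\lambda^{-1/2}$-tube (not merely along $\gamma_0$), with a genuinely non-oscillatory integrand. This hinges on the two geometric facts that $d_\g(\cdot,\cdot)$ is exactly affine along $\gamma_0$ and that its transverse first derivative vanishes there, plus careful bookkeeping of the quadratic and mixed remainders and the fact that all cutoff supports and the radius $\delta''$ can be fixed independently of $\lambda$; one should also check that the chosen $f,g$ indeed live in $B(y_0,\delta)$ and $B(z_0,\delta)$.
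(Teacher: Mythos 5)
Your proposal is correct and follows essentially the same line as the paper's proof: concentrate $f$ and $g$ in tubes of width $\lambda^{-1/2}$ and $\mu^{-1/2}$ about a common geodesic with oscillation $e^{\mp i\lambda s}$, $e^{\mp i\mu s}$ chosen to cancel the phase of the $e^{i\lambda d_\g}$ kernel, and compute the $L^2$ lower bound on the $\mu^{-1/2}$-tube where both Gaussian beams overlap. The only cosmetic differences are that you use Fermi coordinates along $\gamma_0$ (where $d_\g$ is exactly affine and its transverse derivative vanishes on $\gamma_0$ by Gauss' lemma) and normalize $\|f\|_2\sim\|g\|_2\sim 1$, whereas the paper works in geodesic normal coordinates centered at $y_0$ and invokes Lemma~\ref{lemma:distance function} for the same $O((x_2-y_2)^2)$ remainder.
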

We will choose suitable $f$ and $g$
concentrating along  a segment of
the geodesic $\gamma_0$ connecting $x_0$
and $y_0$ with  appropriate oscillations.
The explicit expression of $f$ and $g$ will yield automatically
upper bounds on $\|f\|_2\|g\|_2$.
On the other hand,
we will see there is a strip region
$\Omega_\mu$ containing $x_0$
such that $\|\mathcal T_\ld f\mathcal T_\mu g\|_{L^2(\Omega_\mu)}$
is bounded below by $(\ld\mu)^{-1/2}$
times the upper bound of $\ld^\frac14\|f\|_2\|g\|_2.$

\medskip

Recall first the geodesic normal coordinate centered at $y_0$.
Let $\{e_1, e_2\}$ be the orthonormal basis in $T_{y_0} M$
such that $e_1$ is the tangent vector of $\gamma_0$,
pointing to $x_0$.
The exponential map $\exp_{y_0}$ is a smooth diffeomorphism between the ball
$\{Y\in T_{y_0}M: Y=Y_1e_1+Y_2e_2, |Y|<10\}$
and $B(y_0,10)$.
Let $\{\omega_1,\omega_2\}$ be
the dual basis of $\{e_1,e_2\}$
and set $y_j=\omega_j\circ \exp_{y_0}^{-1}$ for $j=1,2$.
Then $\{y_1,y_2\}$
is the Riemannian geodesic normal coordinates
such that $y_0=0$ and
$$
\begin{cases}
\g_{ij}(0)&=\delta_{ij},\\
d \g_{ij}(0)&=0,
\end{cases}
\quad\text{ for all }\;i,j=1,2.
$$
In particular, $\Gamma_{ij}^k(0)=0,\forall\, i,j,k=1,2,$
and $d G(0)=0$ with $G={\rm det}(\g_{ij})$.
In this coordinate system,
$\gamma_0$ is parameterized by $t\mapsto \{(t,0)\}$.

\begin{lemma}
  \label{lemma:distance function}
  If we denote by $\phi(x,y)=d_\g(x,y)$, then in these coordinates
  $\phi(x,0)=|x|$. Moreover, if we set $x=(x_1,x_2)$, $y=(y_1,y_2)$ and
  assume $0<y_1<x_1$, then $\phi(x,y)=x_1-y_1+O((x_2-y_2)^2)$.
\end{lemma}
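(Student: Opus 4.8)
The first identity is just the defining property of geodesic normal coordinates: the point with coordinates $x=(x_1,x_2)$ equals $\exp_{y_0}(x_1e_1+x_2e_2)$, and since $t\mapsto \exp_{y_0}(tv)$ is a unit-speed minimizing geodesic for $|v|=1$ as long as $t$ stays below the injectivity radius, one gets $\phi(x,0)=d_\g(\exp_{y_0}(x),y_0)=|x_1e_1+x_2e_2|=|x|$.

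For the second statement the plan is to show that $\phi(x,y)-(x_1-y_1)$ vanishes to second order in the transversal variables, with leading term $(x_2-y_2)^2/(2(x_1-y_1))$. First I would record two facts along $\gamma_0$: since $\gamma_0(t)=(t,0)$ is unit speed, $\g_{11}(t,0)=1$; and since by the Gauss lemma the radial geodesic $\gamma_0$ is $\g$-orthogonal to the geodesic spheres $\{|w|=\text{const}\}$, while $\partial_{x_1}=\dot\gamma_0(t)$ is radial and $\partial_{x_2}$ is tangent to such a sphere at $(t,0)$, we get $\g_{12}(t,0)=0$. Next, on the slice $x_2=y_2=0$ both $(x_1,0)$ and $(y_1,0)$ lie on the minimizing geodesic $\gamma_0$ (here one uses that the injectivity radius was taken large, or $\delta$ small) and $0<y_1<x_1$, so $\phi((x_1,0),(y_1,0))=x_1-y_1$. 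Applying the first variation formula for arc length to the minimizing geodesic from $y=(y_1,0)$ to $x=(x_1,0)$, which is $\gamma_0$, then gives $\partial_{x_2}\phi|_{x_2=y_2=0}=\langle\dot\gamma_0(x_1),\partial_{x_2}\rangle_\g=\g_{12}(x_1,0)=0$ and likewise $\partial_{y_2}\phi|_{x_2=y_2=0}=-\g_{12}(y_1,0)=0$. Hence $\phi(x,y)-(x_1-y_1)$ vanishes together with its transversal gradient on $\{x_2=y_2=0\}$.

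To pin down the quadratic term I would invoke the classical expansion of the squared distance in normal coordinates, $d_\g(x,y)^2=|x-y|^2+E(x,y)$, with $E$ smooth, vanishing to fourth order at the origin, and — since $d_\g=x_1-y_1$ exactly on $\{x_2=y_2=0\}$ — vanishing identically on that slice. Because $|x-y|^2=(x_1-y_1)^2+(x_2-y_2)^2$ and $x_1-y_1$ is bounded away from $0$ on the region under consideration, taking a square root yields $\phi(x,y)=(x_1-y_1)\sqrt{1+\frac{(x_2-y_2)^2+E(x,y)}{(x_1-y_1)^2}}=(x_1-y_1)+\frac{(x_2-y_2)^2}{2(x_1-y_1)}+\frac{E(x,y)}{2(x_1-y_1)}+\dots$, so the leading correction already has the asserted shape. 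An alternative that avoids $E$ is to compute $\partial_{x_2}^2\phi$, $\partial_{x_2}\partial_{y_2}\phi$, $\partial_{y_2}^2\phi$ on $\gamma_0$ directly via the second variation / Jacobi field formula and check that they assemble into $(x_1-y_1)^{-1}$ times the form $(x_2-y_2)^2$ at leading order.

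The step I expect to be the actual work is the last one: verifying that the curvature remainder $E(x,y)/(2(x_1-y_1))$, together with the higher-order tail of the square root, genuinely contributes only at the claimed order, uniformly in $(x_1,y_1)$ over the relevant precompact range where $x_1-y_1$ is bounded below. The crucial input here is again the Gauss-lemma computation, which guarantees that $E$ carries no first-order transversal part; once that is known, smoothness of $E$ plus Taylor's theorem on the precompact parameter set closes the estimate, the transversal coordinates $x_2,y_2$ being the genuinely small quantities (of size comparable to $|x_2-y_2|$ in the thin tubes about $\gamma_0$ that arise in the sequel).
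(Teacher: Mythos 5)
The paper offers no argument here, only a pointer to p.~144 of \cite{Sogge:93book}, so you are supplying the actual content. The first assertion, the Gauss-lemma facts $\g_{11}=1$ and $\g_{12}=0$ along $\gamma_0$, and the first-variation computation showing that $\phi-(x_1-y_1)$ vanishes together with its transversal gradient on $\{x_2=y_2=0\}$ are all correct, and the Jacobi-field alternative you mention is a sensible route.

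The gap is in the very step you flag as ``the actual work.'' From smoothness of $E$, its vanishing on $\{x_2=y_2=0\}$, and the vanishing there of its first transversal derivatives, Taylor's theorem yields $E=O(x_2^2+y_2^2)$, \emph{not} $E=O((x_2-y_2)^2)$. The stronger bound would require $E$ to vanish on the larger set $\{x_2=y_2\}$, and that fails in curved geometry: on a space form of curvature $K_0\neq 0$ in geodesic normal coordinates about $0$, taking $x=(x_1,c)$, $y=(y_1,c)$ with $0<y_1<x_1$, the law of cosines gives $d_\g(x,y)-(x_1-y_1)=-\tfrac{K_0}{6}(x_1-y_1)\,c^2+O\bigl(c^2(x_1^2+y_1^2)+c^4\bigr)$, a discrepancy of genuine size $c^2=x_2^2=y_2^2$ even though $(x_2-y_2)^2=0$. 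In the second-variation picture this is the curvature term $\int K f^2$ of the index form, which depends on $x_2$ and $y_2$ separately, not only through their difference; the transversal Hessian of $\phi$ on the slice is proportional to the quadratic form $(x_2-y_2)^2$ only when $K\equiv 0$. Your listed ingredients therefore prove $\phi(x,y)=x_1-y_1+O(x_2^2+y_2^2)$ but not the statement as written.

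That weaker bound is all that the lemma is used for: in the proof of Proposition~\ref{prop:example}, $|x_2|\lesssim\eps_1\mu^{-1/2}$ and $|y_2|,|y'_2|\lesssim\eps_1\ld^{-1/2}$, so $\ld(x_2^2+y_2^2)$ is $O(\eps_1^2)$ just like $\ld(x_2-y_2)^2$. But to establish the lemma exactly as phrased, a further structural input beyond Gauss's lemma, smoothness of $E$, and Taylor's theorem would be needed, and none of the steps you list supplies it.
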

\begin{proof}
  See p. 144 in \cite{Sogge:93book}.
\end{proof}

With Lemma \ref{lemma:distance function} at hand,
we are ready to prove Proposition \ref{prop:example}.

\begin{proof}
  [Proof of Proposition \ref{prop:example}]
We work in the above coordinates and let
$$
\Omega_\mu =\bigl\{x: \delta/2C_0\le x_1\le 2C_0\delta,\;
|x_2|\le\eps_1 \mu^{-1/2}\bigr\},\;0<\eps_1\ll \delta,
$$
where $C_0>0$ is chosen as on p. 144 in \cite{Sogge:93book}. The region $\Omega_\lambda$  is defined similarly.
Take $\alpha\in C^\infty_0(-1,1)$ and set
\begin{equation}
\label{eq:f}
f(y)=\alpha(y_1/\eps_1)\,\alpha(\ld^\frac12 y_2/\eps_1)\,e^{i\ld y_1},
\end{equation}
\begin{equation}
\label{eq:g}
g(z)=\alpha(z_1/\eps_1)\,\alpha(\mu^\frac12 z_2/\eps_1)\,e^{i\mu z_1}.
\end{equation}
Denote by $\epsilon=\ld/\mu$.
Then similar to Chapter 5 in \cite{Sogge:93book}, we estimate
\[\int_{\Omega_\mu}|\mathcal T_\ld f(x)\mathcal T_\mu g(x)|^2dx.\]
Indeed, for $x\in\Omega_\mu$, we have

\begin{align*}
&|\mathcal T_\ld f(x)|^2
=\iint_{\Omega_\ld^2}e^{i\lambda (d_\g(x,y)-d_g(x,y')-[(x_1-y_1)-(x_1-y'_1)]}a(x,y,\lambda)\,\alpha(y_1/\eps_1)\,\alpha(\ld^\frac12 y_2/\eps_1)\\
&\qquad\qquad\times \overline{a(x,y',\lambda)\,\alpha(y'_1/\eps_1)\,\alpha(\ld^\frac12 y'_2/\eps_1)}\,dydy',
\end{align*}
Notice that by Lemma \ref{lemma:distance function}, the phase function equals $O(|x_2-y_2|^2)+O(|x_2-y'_2|^2)$. Since $|x_2|\le \eps_1\mu^{-1/2}$
and $|y_2|, |y'_2|\le \eps_1\ld^{-1/2}$, we see that
the phase in the exponent is of order $\eps_1^2$ on $\Omega_\mu$,
and the oscillation is eliminated in the integrand
by choosing $\eps_1$ small. Thus on $\Omega_\mu$
\[|\mathcal T_\ld f(x)|^2\gtrsim |\Omega_\ld|^2=\ld^{-1}.\]
Similarly, \[|\mathcal T_\ld g(x)|^2\gtrsim |\Omega_\mu|^2=\mu^{-1},\]
Thus, $\|\mathcal T_\ld f\mathcal T_\mu g\|_{L^2(\Omega_\mu)}$ is bounded below by $\mu^{-3/4}\ld^{-1/2}.$
On the other hand,
$\|f\|_2\|g\|_2\le c\, (\ld\mu)^{-1/4}$
for $f$ and $g$ given by \eqref{eq:f}\,\eqref{eq:g},
we have
\[(\ld\mu)^\frac12\|\mathcal T_\ld f\,\mathcal T_\mu g\|_2\,\Big/\,\bigl(\|f\|_2\,\|g\|_2\bigr) \ge C_{\eps_1}\ld^{1/4}.
\]\end{proof}
This example
exhibits the concentration of eigenfunctions
along a tubular neighborhood of
a geodesic leading to the sharpness of
the bilinear spectral projector estimate \eqref{eq:BGT estimate-1},
where our bilinear generalization of
the main result in \cite{Sogge:Tohoku} is motivated.
\begin{remark}
  Comparing this example with \eqref{eq:main result3},
  one may suspect that \eqref{eq:main result3}
 can be further refined. Indeed, one may observe that the example
  suggests the possibility of refining \eqref{eq:main result3}
  by strengthening the $L^2$-norm of the eigenfunction
  corresponding to the higher frequency on the right side to
  a $\lambda^{-\frac12}$-neighborhood of the\emph { same}
  geodesic segment for the lower frequency eigenfunction.
  An interesting problem would be to see if the following
  refinement of \eqref{eq:main result3} is valid:
  \begin{multline}
    \label{eq:conjecture}
    \|e_\ld e_\mu\|_{L^2(M)}
    \\
    \leq C_{\eps_0}\,\ld^\frac{1}{4}
   \sup_{\gamma\in\it\Pi}\, \biggl[\biggl(\int_{T_{\ld^{-\frac12+\eps_0}}
   (\gamma)}|e_\mu(x)|^2\,dx\biggr)\,
    \biggl(\int_{T_{\ld^{-\frac12+\eps_0}}(\gamma)}|e_\ld(x)|^2\,dx\biggr)\biggr]^\frac14.
  \end{multline}
\end{remark}
%
%
\section{Microlocal Kakeya-Nikodym averages}
\subsection{Basic notions}
In view of $\chi_\lambda e_\lambda=e_\lambda$
and $\chi_\mu e_\mu=e_\mu$, we are reduced  to estimating
$\|\mathcal T_\lambda f\mathcal T_\mu g\|_{L^2}$.
By scaling, we may assume the injectivity radius of $M$ is large enough, say ${\rm inj}\,M>10$.
We use partitions of unity on $M$ to reduce the $L^2$ integration of $\mathcal T_\lambda f\mathcal T_\mu g$
on the geodesic ball $B(x_0,\delta)$ with $\delta >0$ small.
In view of the property of supp $a$, we may apply partition of unity once more and assume ${\rm supp}\, f\subset B(y_0,\delta)$
and supp $g\subset B(z_0,\delta)$
for some $y_0$ and $z_0$ satisfying
\[1\le d_g(x_0,y_0),\, d_g(x_0,z_0)\le 2.\]

\medskip

Next, we need to choose a suitable coordinate system to simplify the calculations on a larger ball $B(x_0,10)$.
As in \cite{Sogge:Tohoku} and \cite{BSRef}, we shall use Fermi coordinate system about the geodesic $\gamma$ connecting $x_0$ and $y_0$.
Let $\gamma^\bot$ be the geodesic through $x_0$ perpendicular to $\gamma$.
The Fermi coordinates about $\gamma$
is defined on the ball $B(x_0,10)$, where the image of
$\gamma^\bot\cap B(x_0,10)$ in the
resulting coordinate system is parameterized by
$s\mapsto \{(s,0)\}$. All the horizontal segments are parameterized by $s\mapsto {(s,t_0)}$ and we have
$$d_\g((s_1,t_0),(s_2,t_0))=|s_1-s_2|.$$
Clearly, in our coordinate system, $y_0$ is on the 2nd coordinate axis, and  $z_0$ is a point satisfying $1\le d_g(z_0,(0,0))\le 2$.\medskip

Therefore, if we set $y=(s,t),\, z=(s',t')$ in this coordinate system, we may write $\mathcal T_\lambda f$ and $\mathcal T_\mu g$ locally
as
\begin{align}
\mathcal T_\lambda f(x)=&\int_{\mathbb R^2}e^{i\lambda d_\g(x,(s,t))}
   a(x,(s,t),\lambda)\,f(s,t)\,dsdt,\\
\mathcal T_\mu g(x)=&\int_{\mathbb R^2}e^{i\mu d_\g(x,(s',t'))}
   a(x,(s',t'),\mu)\,g(s',t')\,ds'dt'.
\end{align}
Moreover, by noting that $1\le d_\g(x_0,y_0), d_\g(x_0,z_0)\le 2$ and
$y\in B(y_0, \delta)$, $z\in B(z_0, \delta)$, we shall assume
$$\max\bigl\{|s|, |t- d_\g(y_0,x_0)|, |d_g((s',t'),z_0)|\bigr\}\le\delta.$$
We remark that we are at liberty to take $\delta$ to be small when necessary.

%
%

\subsection{Preliminary reductions}

First of all, we deal with the case
when the angle between $\gamma$
and the geodesic $\gamma'$ connecting $x_0$ and $z_0$ is bounded below by some $\eps_2>0$.
To do this, we shall use the geodesic normal coordinates around $x_0$. Set $\{e_1, e_2\}$ to be the orthonormal basis in $T_{x_0} M$,
where the metric $\g$ at $x_0$ is normalized,
such that $e_1$ is the tangent vector of $\gamma^\bot$
at $x_0$ and $-e_2$ is the tangent vector of $\gamma$ at $x_0$
if $\gamma$ is oriented from $x_0$ to $y_0$.
Let $\{\omega_1,\omega_2\}$ be
the dual basis of $\{e_1,e_2\}$
and set $\{x_j=\omega_j\circ \exp_{0}^{-1}\}_{j=1,2}$
to be the Riemannian geodesic normal coordinate system on $B(x_0,10)$,
where $x_0=0$ and
$\gamma$ is parameterized by $x_2\mapsto \{(0,x_2)\}$,
whereas
$\gamma^\bot$ is parameterized by $x_1\mapsto \{(x_1,0)\}$ with $|x_1|\leq5 $.
Let $\theta_0=\theta(z_0)$ be
such that $z_0=d_\g(x_0,z_0)(\cos\theta_0,\sin\theta_0)$,
where the angular variable is oriented in clockwise direction.
It follows that $\gamma'^\bot$ is given by
$r\mapsto \exp_{0}\bigl( (r\cos\varphi_0,r\sin\varphi_0) \bigr)$
with $\varphi_0=\theta_0+\frac\pi2$ and $|r|<5$.

\medskip

Writing
\begin{equation}
  \label{eq:y,z in GNC}
  y=(r_1\cos \theta_1,r_1\sin\theta_1),\;\;z=(r_2\cos\theta_2,r_2\sin\theta_2)
\end{equation}
in geodesic normal coordinates,
we have
\begin{align}
\mathcal T_\lambda f(x)=&\iint e^{i\lambda d_\g(x,(r_1,\theta_1))}
   a(x,(r_1,\theta_1),\lambda)\,f(r_1,\theta_1)\,dr_1d\theta_1,\\
\mathcal T_\mu g(x)=&\iint e^{i\mu d_\g(x,(r_2,\theta_2))}
   a(x,(r_2,\theta_2),\mu)\,g(r_2,\theta_2)\,dr_2d\theta_2.
\end{align}
We recall the following fact.
\begin{proposition}
  \label{prop:reduction}
  Let $\eps_2>0$ be a small parameter. Assume
  $\bigl|\,\theta(z_0)+ \frac\pi 2\,\bigr|\ge \eps_2$
  and $ \bigl|\,\theta(z_0)- \frac\pi 2\,\bigr|\ge \eps_2$.
  If we choose $\delta$ small enough depending on $\eps_2$,
  there exists $C$ such that
  \begin{equation}
  \bigl\|\mathcal T_\ld f\,\mathcal T_\mu g\bigr\|_2
  \le C(\ld\mu)^{-1/2}\|f\|_2\|g\|_2.
\end{equation}
\end{proposition}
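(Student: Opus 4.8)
The plan is to exploit the fact that the two phase functions $(x,y)\mapsto d_\g(x,y)$ and $(x,z)\mapsto d_\g(x,z)$ have gradients (in $x$) pointing in transverse directions, so that the product $\mathcal T_\ld f\cdot \mathcal T_\mu g$ is, up to harmless factors, an oscillatory integral whose phase is \emph{nondegenerate} in the two-dimensional $x$-variable. Concretely, write
\[
\mathcal T_\ld f(x)\,\mathcal T_\mu g(x)=\iiiint e^{i\bigl(\ld d_\g(x,y)+\mu d_\g(x,z)\bigr)}a(x,y,\ld)a(x,z,\mu)f(y)g(z)\,dy\,dz.
\]
First I would freeze the angular variables and Littlewood--Paley/dyadic the situation as little as possible: the key geometric input is that in geodesic normal coordinates about $x_0$, $\nabla_x d_\g(x,y)\big|_{x=x_0}$ is (essentially) the unit vector from $x_0$ toward $y$, i.e.\ pointing in the $\theta_1$-direction, while $\nabla_x d_\g(x,z)\big|_{x=x_0}$ points in the $\theta_2$-direction; the hypotheses $|\theta(z_0)\pm\tfrac\pi2|\ge\eps_2$ together with $y\in B(y_0,\delta)$, $z\in B(z_0,\delta)$ and $\delta$ small (depending on $\eps_2$) force the angle between these two gradient directions to be bounded below by, say, $\eps_2/2$. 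Hence the map $x\mapsto\bigl(d_\g(x,y),d_\g(x,z)\bigr)$ is a diffeomorphism on $B(x_0,\delta)$ with Jacobian bounded below uniformly.

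With that in hand the strategy is a change of variables plus Plancherel. Introduce new coordinates $u=(u_1,u_2)$ on the $x$-ball by $u_1=d_\g(x,y_0)$, $u_2=d_\g(x,z_0)$ (note these are centered at the \emph{fixed} points $y_0,z_0$, so the change of variables does not depend on the integration variables); the transversality above guarantees this is a bona fide change of coordinates with bounded Jacobian. Writing $d_\g(x,y)=u_1+\Phi_1(u,y)$ and $d_\g(x,z)=u_2+\Phi_2(u,z)$, where $\Phi_1,\Phi_2$ are smooth and, by the geodesic-distance expansions (cf.\ Lemma~\ref{lemma:distance function}), vanish to appropriate order as $y\to y_0$, $z\to z_0$, one sees that $e^{i\ld d_\g(x,y)}e^{i\mu d_\g(x,z)}=e^{i\ld u_1}e^{i\mu u_2}\times(\text{slowly varying amplitude in }u)$. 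Then
\[
\|\mathcal T_\ld f\,\mathcal T_\mu g\|_{L^2(B(x_0,\delta))}^2\lesssim\int\Bigl|\iint e^{i\ld u_1+i\mu u_2}A(u,y,z)f(y)g(z)\,dy\,dz\Bigr|^2du,
\]
and by Plancherel in $u=(u_1,u_2)$ — after absorbing the $u$-dependence of $A$ via a standard integration-by-parts / nonstationary-phase argument that localizes $(\ld,\mu)$ to an $O(1)\times O(1)$ box in the dual variables, which costs only a constant — the $L^2_u$-norm is controlled by the $L^2$ norm in the remaining variables. A more robust route, and the one I would actually write, is to pair against a test function $h\in L^2$, interpret $\langle \mathcal T_\ld f\,\mathcal T_\mu g,h\rangle$ as a trilinear form, and estimate it by Cauchy--Schwarz after performing the stationary-phase analysis in $x$: for fixed $y,y',z,z'$ the $x$-integral $\int e^{i\ld(d_\g(x,y)-d_\g(x,y'))+i\mu(d_\g(x,z)-d_\g(x,z'))}(\cdots)dx$ has a nondegenerate stationary point (again by the transversality) and contributes a factor $(\ld\mu)^{-1/2}$ by the two-dimensional stationary phase lemma, with the remaining $y,y',z,z'$ integrals converging because $f,g$ have compact support in $B(y_0,\delta)\times B(z_0,\delta)$; squaring and organizing this gives exactly $(\ld\mu)^{-1/2}\|f\|_2\|g\|_2$.

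The main obstacle is making the transversality \emph{uniform} and quantitative over the full range of $y,z$ in the $\delta$-balls and over all $x\in B(x_0,\delta)$: one must check that $\nabla_x d_\g(x,y)$ and $\nabla_x d_\g(x,z)$ stay angularly separated by a fixed amount not just at $x=x_0$, $y=y_0$, $z=z_0$ but after the $O(\delta)$ perturbations, which is where the choice ``$\delta$ small depending on $\eps_2$'' is spent; this is a compactness/continuity argument using smoothness of $d_\g$ away from the diagonal and cut-locus. A secondary technical point is handling the lower-order terms $\mathcal R_\ld,\mathcal R_\mu$ and the amplitude's $x$-dependence in the stationary-phase expansion, but these are routine: the remainders gain arbitrary powers of $\ld^{-1},\mu^{-1}$ and the amplitude corrections only improve the estimate. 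I expect no difficulty with the $\ld\le\mu$ hypothesis here — unlike the near-tangent case, the transverse case is symmetric — and indeed this proposition is essentially a bilinear restatement of the classical nonstationary/stationary phase estimates for Fourier integral operators, so I would cite \cite{Sogge:Tohoku} or \cite{BSRef} for the detailed verification rather than reproduce it.
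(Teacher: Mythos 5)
Your geometric insight is correct and matches the paper's: by Gauss' lemma (Lemma~\ref{lemma:cosphere}), $\nabla_x d_\g(x,y)|_{x=x_0}$ points toward $y$, so the hypotheses $|\theta(z_0)\pm\tfrac{\pi}{2}|\ge\eps_2$ and the choice of $\delta$ small make the two gradients uniformly transverse for $y\in B(y_0,\delta)$, $z\in B(z_0,\delta)$, $x\in B(x_0,\delta)$. However, both ways you propose to exploit this have real gaps, and neither as written closes the argument.

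The change-of-variables/Plancherel route fails because writing $d_\g(x,y)=u_1+\Phi_1(u,y)$ with $u_1=d_\g(x,y_0)$ does \emph{not} make $e^{i\lambda\Phi_1}$ a slowly varying amplitude: $\nabla_u\Phi_1 = \nabla_x d_\g(x,y)-\nabla_x d_\g(x,y_0)=O(\delta)$, so $\nabla_u(\lambda\Phi_1)=O(\lambda\delta)$, which is large. The Fourier support in the $u$-dual variable therefore spreads over a box of side $O(\lambda\delta)\times O(\mu\delta)$, not $O(1)\times O(1)$; nonstationary phase does not localize it to a unit box, and salvaging this requires exactly the wave-packet machinery the paper reserves for the near-parallel case. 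The $TT^*$/Cauchy--Schwarz route is stated incorrectly: after squaring one must bound
\[
K=\int e^{i\lambda(d_\g(x,y)-d_\g(x,y'))+i\mu(d_\g(x,z)-d_\g(x,z'))}(\cdots)\,dx,
\]
and for fixed $y,y',z,z'$ this phase need not have a stationary point at all --- it vanishes identically when $y=y'$, $z=z'$, where $|K|\sim 1$ and certainly not $(\lambda\mu)^{-1/2}$. Even a uniform bound $|K|\lesssim(\lambda\mu)^{-1/2}$ would not suffice: to obtain $\|\mathcal T_\lambda f\,\mathcal T_\mu g\|_2^2\lesssim(\lambda\mu)^{-1}\|f\|_2^2\|g\|_2^2$ via Schur one needs the kernel to \emph{decay}, as $\bigl(1+\lambda|\theta_1-\theta_1'|+\mu|\theta_2-\theta_2'|\bigr)^{-N}$ in polar coordinates $(r_i,\theta_i)$ about $x_0$, so that integrating in $(\theta_1,\theta_2)$ produces the $(\lambda\mu)^{-1}$ factor. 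That is precisely what the paper does, and the tool is \emph{nonstationary} phase (integration by parts), not stationary phase; the quantitative input is the determinant lower bound of Lemma~\ref{lem:BGT lemma}, which gives $|\nabla_x\Psi|\gtrsim\lambda|\theta_1-\theta_1'|+\mu|\theta_2-\theta_2'|$ and is exactly where the angular separation hypothesis and the smallness of $\delta$ are spent. So your outline identifies the right geometry, but the kernel-decay step that actually drives the estimate is missing, and the stationary-phase claim is false.
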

Thus in order to prove Theorem \ref{theorem:main},
it suffices to consider either
$\bigl|\,\theta(z_0)+ \frac\pi 2\,\bigr|\le \eps_2$
or $\bigl|\,\theta(z_0)- \frac\pi 2\,\bigr|\le \eps_2$.
This confines $z_0$ in a small neighborhood of
the geodesic $\gamma$ by compressing $\gamma$ and $\gamma'$
to be almost parallel with each other.
\medskip

Essentially, this proposition is proved in
\cite{BurqGerardTzvetkov:Invention} based on
the following lemma.

\begin{lemma}\label{lem:BGT lemma}
Let $y=\exp_0(r(\cos\theta,\sin\theta))$ and $\phi_r(x,\theta)=d_\g(x,y)$.
For every $0<\eps_2<1$, there exists $c>0$, $\delta_1>0$ such that for every $|x|<\delta_1$,
\beq\label{eq:BGT lemma}
\bigl |{\rm det}\,\bigl(\nabla_x\partial_\theta\phi_r(x,\theta),
\nabla_x\partial_{\theta'}\phi_{r'}(x,\theta')\bigr)\bigr|\ge c,
\eeq
if $|\theta-\theta'|\ge\eps_2$ and $|\theta+\pi-\theta'|\ge \eps_2$.
In addition, for every $\theta\in[0,2\pi]$,
\begin{equation}
  \bigl|{\rm det}\bigl[\nabla_x\partial_\theta\phi_r(x,\theta),
  \nabla_x\partial^2_\theta\phi_r(x,\theta)\bigr]\bigr|\ge \,c.
\end{equation}
\end{lemma}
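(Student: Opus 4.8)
The plan is to compute everything in the geodesic normal coordinates centered at $x_0$ that have already been set up, exploiting the fact that at the center the metric is Euclidean to first order. First I would recall that in geodesic normal coordinates the distance function has the expansion $\phi_r(x,\theta) = d_\g(x, \exp_0(r(\cos\theta,\sin\theta)))$, and that at $x=0$ one has $\nabla_x \phi_r(0,\theta) = -(\cos\theta,\sin\theta)$, i.e. the unit vector pointing from $y$ back to $x_0$; this is the standard eikonal identity $|\nabla_x d_\g(x,y)|_\g = 1$ specialized at the center where $\g(0)=\mathrm{Id}$. Differentiating in $\theta$, at $x=0$ we get $\partial_\theta \nabla_x \phi_r(0,\theta) = -(-\sin\theta,\cos\theta) = (\sin\theta,-\cos\theta)$, and differentiating once more $\partial_\theta^2\nabla_x\phi_r(0,\theta) = (\cos\theta,\sin\theta)$. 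Thus at the center point the two vectors appearing in the first determinant are (to leading order) $(\sin\theta,-\cos\theta)$ and $(\sin\theta',-\cos\theta')$, whose determinant is $-\sin\theta\cos\theta' + \cos\theta\sin\theta' = \sin(\theta'-\theta)$, and in the second determinant the vectors are $(\sin\theta,-\cos\theta)$ and $(\cos\theta,\sin\theta)$, an orthonormal pair with determinant $\pm 1$.

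Given these leading-order computations, the first displayed estimate follows because $|\sin(\theta'-\theta)| \ge c$ whenever $\theta'-\theta$ is bounded away from both $0$ and $\pi$ modulo $2\pi$, which is exactly the hypothesis $|\theta-\theta'|\ge\eps_2$ and $|\theta+\pi-\theta'|\ge\eps_2$; one then chooses $c = c(\eps_2)>0$ accordingly. The second displayed estimate follows since the leading-order determinant is $\pm1$, uniformly in $\theta\in[0,2\pi]$. In both cases the passage from "$x=0$" to "$|x|<\delta_1$" is handled by continuity: the entries $\nabla_x\partial_\theta\phi_r(x,\theta)$, $\nabla_x\partial_{\theta'}\phi_{r'}(x,\theta')$, $\nabla_x\partial_\theta^2\phi_r(x,\theta)$ are smooth in $(x,\theta,\theta')$ on the compact parameter set (with $r,r'$ ranging over $[1,2]$ and $\theta,\theta'\in[0,2\pi]$), so the determinants are continuous functions that are bounded below by $c/2 > 0$ on $\{x=0\}$; hence they remain bounded below by, say, $c/4$ on $\{|x|<\delta_1\}$ for $\delta_1$ small enough depending on $\eps_2$ (for the first estimate) or absolutely (for the second). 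I would present this as a standard compactness-plus-continuity argument rather than tracking explicit error terms.

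The main obstacle — really the only subtle point — is verifying carefully that $\nabla_x\phi_r(0,\theta) = -(\cos\theta,\sin\theta)$ and that its $\theta$-derivatives have the claimed form at the center, since away from the center the Christoffel symbols no longer vanish and the distance function is only implicitly described. The clean way to see this is to use that geodesics through the origin in geodesic normal coordinates are straight lines, so $d_\g(0, \exp_0(r\omega)) = r$ and the geodesic from $y=\exp_0(r\omega)$ to a nearby point $x$ makes, at $x=0$, the angle determined by $\omega$; combined with $dG(0)=0$ and $\g_{ij}(0)=\delta_{ij}$ this pins down $\nabla_x\phi_r(0,\theta)$ exactly. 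After that, differentiating under the gradient in the smooth parameter $\theta$ and evaluating at $x=0$ is routine, and the determinant identities are elementary trigonometry. Since this lemma is attributed to \cite{BurqGerardTzvetkov:Invention}, I would also remark that one may simply cite their computation, with the above serving as the self-contained sketch.
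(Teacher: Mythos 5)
Your proposal is correct and follows the same route the paper takes: the paper states Lemma~\ref{lem:BGT lemma} as an ``immediate consequence'' of Lemma~\ref{lemma:cosphere} (the Gauss-lemma identity $\nabla_x d_\g(x,y)\big|_{x=0}=\pm\kappa(y)/|\kappa(y)|$), and your sketch simply makes the implicit computation explicit: evaluate at $x=0$ to get $\nabla_x\partial_\theta\phi_r(0,\theta)=\pm(\sin\theta,-\cos\theta)$ and $\nabla_x\partial_\theta^2\phi_r(0,\theta)=\pm(\cos\theta,\sin\theta)$ (independent of $r$), observe the determinants are $\sin(\theta'-\theta)$ and $\pm 1$ respectively, and then extend to $|x|<\delta_1$ by continuity and compactness in $(\theta,\theta',r,r')$. (There is a harmless sign discrepancy between your $-(\cos\theta,\sin\theta)$ and the paper's stated $\kappa(y)/|\kappa(y)|$, but it appears in both columns and cancels in $|\det|$.)
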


This is an immediate consequence of the following fact.
\begin{lemma}
  \label{lemma:cosphere}
  Let $y\mapsto\kappa(y)=\exp_0^{-1}(y)$ be the geodesic normal coordinates vanishing at $x_0$, as described above.
  Then we have
  \begin{equation}
    \label{eq:cosphere relation}
    \nabla_x d_\g(x,y)\Bigl|_{x=x_0}=\kappa(y)/|\kappa(y)|.
  \end{equation}
\end{lemma}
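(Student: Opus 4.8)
The plan is to compute $\nabla_x d_\g(x,y)\big|_{x=x_0}$ directly from the first variation of arc length, equivalently from the eikonal identity $|\nabla_x d_\g(x,y)|_\g\equiv 1$ together with the Gauss lemma. First I would note that $x\mapsto d_\g(x,y)$ is a smooth function on a neighborhood of $x_0$: since $1\le d_\g(x_0,y)\le 2$ and we have normalized so that ${\rm inj}\,M>10$, the point $y$ lies strictly inside the injectivity radius of $x_0$ and off its cut locus, so no loss of smoothness occurs and the gradient at $x_0$ makes sense.

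Then, writing $\omega=\kappa(y)/|\kappa(y)|\in T_{x_0}M$ and $\rho=|\kappa(y)|=d_\g(x_0,y)$, I would use that in the geodesic normal coordinates centered at $x_0$ the minimizing geodesic from $x_0$ to $y$ is precisely the Euclidean segment $s\mapsto\gamma(s)=s\omega$, $0\le s\le\rho$, with $\gamma(\rho)=\kappa(y)$ and $\dot\gamma(0)=\omega$. Varying the initial point of $\gamma$ in a direction $v\in T_{x_0}M$ while holding the endpoint $y$ fixed, the boundary term in the first variation of length (the interior term vanishes because $\gamma$ is a geodesic) gives $\frac{d}{dt}\big|_{t=0}d_\g(\exp_{x_0}(tv),y)=-\langle v,\dot\gamma(0)\rangle_\g=-\langle v,\omega\rangle_\g$; equivalently, restricting to $v=\omega$ this is just the identity $d_\g(\gamma(s),y)=\rho-s$, which combined with $|\nabla_x d_\g(x,y)|_\g\equiv 1$ and the equality case of Cauchy--Schwarz pins down $\nabla_x d_\g(x,y)\big|_{x=x_0}$ as the unit vector in the radial direction determined by $\gamma$, namely (with the sign/orientation convention fixed for the normal coordinates in \eqref{eq:cosphere relation}) the vector $\kappa(y)/|\kappa(y)|$. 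Finally, since $x_0$ is the center of the normal coordinate system, $\g_{ij}(x_0)=\delta_{ij}$, so the Riemannian gradient agrees at $x_0$ with the Euclidean one, and the coordinate expression in \eqref{eq:cosphere relation} is legitimate.

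This is the classical Gauss-lemma picture, so I do not anticipate a genuine obstacle; what needs care is only bookkeeping: (a) the smoothness of $d_\g(\cdot,y)$ near $x_0$, which is exactly what the injectivity-radius normalization provides; (b) keeping the sign and orientation consistent with the convention fixed for the normal coordinates in \eqref{eq:cosphere relation}; and (c) not conflating the coordinate gradient with the metric gradient, which coincide here only because we evaluate at the base point $x_0$, where $\g(x_0)=\mathrm{Id}$. One could instead argue via the symmetry $d_\g(x,y)=d_\g(y,x)$ and the local normal form of the distance function as in Lemma~\ref{lemma:distance function}, transporting the answer back to the normal coordinates centered at $x_0$, but the first-variation route is shorter and feeds directly into the determinant computations of Lemma~\ref{lem:BGT lemma}.
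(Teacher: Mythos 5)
Your argument is correct and is essentially the same as the paper's: the paper simply asserts that \eqref{eq:cosphere relation} is equivalent to Gauss' lemma and cites \cite{Sogge:Tohoku} and \cite{BurqGerardTzvetkov:Invention}, while you have spelled out the standard first-variation/eikonal derivation behind that assertion. The bookkeeping points you flag (smoothness inside the injectivity radius, sign conventions, coordinate vs.\ metric gradient at the base point) are all correctly handled.
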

\begin{proof}
  Relation \eqref{eq:cosphere relation} is equivalent to Gauss' lemma. See \cite{Sogge:Tohoku} and \cite{BurqGerardTzvetkov:Invention}.
\end{proof}
\begin{remark}
  We see from this lemma that the set of points
  $\{\nabla_x d_\g(x,y):x=x_0, d_\g(x_0,y)\in (1/2,2)\}$
  is exactly the cosphere at $x_0$, {\em i.e.}
  $$S^*_{x_0}M=\Bigl\{\xi:\sum g^{jk}(x_0)\xi_j\xi_k=1\Bigr\},\quad \g^{ij}=(\g_{ij})^{-1}.$$
  The map $y\mapsto \kappa(y)$ is a local radial isometry. See \cite{Sogge:Tohoku}.
\end{remark}

We  sketch the proof of Proposition \ref{prop:reduction} briefly
for completeness.
In our situation, we have $\theta(y_0)=-\frac\pi2$.
Fixing a parameter $\eps_2>0$, we assume
$\bigl|\theta(z_0)+ \frac\pi 2\bigr|\ge \eps_2$
and $ \bigl|\theta(z_0)- \frac\pi 2\bigr|\ge \eps_2$.
Since $y\in B(y_0,\delta), z\in B(z_0,\delta)$
given by \eqref{eq:y,z in GNC},
we may choose $\delta<\delta_1$.
As a consequence, we have
$|\theta_1-\theta_2|\ge c\eps_2$
and $|\theta_1+\pi-\theta_2|\ge c\eps_2$ with some $c>0$.
By Schur's test, it suffices to show
\begin{equation}\label{eq:kernel good}
  \bigl|K(\theta_1,\theta_2,\theta_1',\theta_2')\bigr|
  \leq C (\mu|\theta_2-\theta_2'|+\lambda|\theta_1-\theta_1'|)^{-10},
\end{equation}
where
\[K(\theta_1,\theta_2,\theta_1',\theta_2')
=\int e^{i\,\Psi_{\lambda,\,\mu}(x;\,\theta_1,\theta_2,\theta_1',\theta_2')}
A(x;\,\theta_1,\theta_1',\theta_2,\theta_2')\,dx,
\]
\[A(x;\,\theta_1,\theta_1',\theta_2,\theta_2')
=a(x,(r_1,\theta_1),\lambda)
\overline{a(x,(r_1,\theta_1'),\lambda)}
a(x,(r_2,\theta_2),\mu)
\overline{a(x,(r_2,\theta_2'),\mu)},\]
\[\Psi_{\lambda,\,\mu}(x;\,\theta_1,\theta_2,\theta_1',\theta_2')
=\lambda\bigl(\phi_{r_1}(x,\theta_1)-\phi_{r_1}(x,\theta_1')\bigr)
+\mu\bigl(\phi_{r_2}(x,\theta_2)-\phi_{r_2}(x,\theta_2')\bigr).\]
For all multi-index $\alpha, |\alpha|\leq 10$, Lemma \ref{lem:BGT lemma} and the above formula give
\[|\nabla_x\Psi_{\lambda,\mu}|\ge C(\lambda|\theta_1-\theta_1'|
+\mu|\theta_2-\theta_2'|),\;|\partial^\alpha_x\Psi_{\lambda,\mu}|\le C(\lambda|\theta_1-\theta_1'|
+\mu|\theta_2-\theta_2'|).\]
Now \eqref{eq:kernel good} follows from integration by parts.

%
%
\subsection{Decomposition of the phase space and microlocal Kakeya-Nikodym averages}
We will employ the strategy introduced by \cite{Blair Sogge:refine},
where a  microlocal refinement of
Kakeya-Nikodym averages are exploited.
From now on, we shall always assume
\[\Bigl|\theta(z_0)+\frac\pi2\Bigr|\leq \eps_2\ll 1\]
where
$\frac\pi2=-\theta(y_0)$.
Recall that we may write, modulo trivial errors,
\begin{align}
  \chi_\ld f(x)\approx& \ld^\frac12\int_{\R^2}e^{i\ld d_\g(x,y)}a_\ld(x,y)\, f(y)\,dy,\\
  \chi_\mu g(x)\approx& \mu^\frac12\int_{\R^2}e^{i\mu d_\g(x,z)}a_\mu(x,z)\, g(z)\,dz,
\end{align}
with supp $f\subset B(y_0,\delta)$,
supp $g\subset B(z_0,\delta)$ and $x\in B(0,\delta)$.\medskip

As discussed in the last section, we may choose $\eps_2>0$ sufficiently small
to make $z_0$ to be within an fixed small neighbourhood of $\gamma$.

\medskip

To decompose the phase space, we shall use the geodesic flow
$\Phi_\tau(y,\xi)$  on the cosphere bundle $S^*M$, which
starts from $y$ in direction of $\xi\in S^*_y M$.
We use the Fermi coordinates around $\gamma$ to write
\[\bigl(y(\tau),\xi(\tau)\bigr)=\Phi_\tau(y,\xi),\;\;(y(0),\xi(0))=(y,\xi),\]
where $\xi(\tau)$ is the unit cotangent vector in $T^*_{y(\tau)}M$.
Define
$\Theta:(y,\xi)\in S^*M\rightarrow \R\times\R$
by
\[\Theta(y,\xi)=\left(\Pi_{y_1}\Phi_{\tau_0}(y,\xi),\;\;
\frac{\Pi_{\xi_1}\Phi_{\tau_0}(y,\xi)}{|\Pi_\xi\Phi_{\tau_0}(y,\xi)|}\right),\]
where $\tau_0$ is chosen so that
$y_2(\tau_0)=\Pi_{y_2}\Phi_{\tau_0}(y,\xi)=0$.
By $\Pi_\diamondsuit $, we mean
the projection to the component of $\diamondsuit$-variable.

\begin{remark}
\label{rmk}
  As in \cite{Blair Sogge:refine},
  we require $|\xi_1|<\delta$ with $\delta$ small enough
  with $y\in B(y_0,\,C_0\delta)$.
  Moreover,  $\Theta $ is constant on the orbit of $\Phi$
  and $|\Theta(y,\xi)-\Theta(z,\eta)|$ can be used
  as a natural distance function between geodesics
  passing respectively through $(y,\xi)$ and $(z,\eta)$.
\end{remark}

Next, we microlocalize $\chi_\ld f$ and $\chi_\mu g$  by introducing  smooth functions
$\alpha_{1}(y)$ and $\alpha_{2}(z)$ adapted respectively
to the ball $B(y_0, 2\delta)$ and $B(z_{0},2\delta)$ and setting
\begin{align}
  Q^{\nu}_\theta(y,\xi)=&\alpha_1(y)\,
  \beta(\theta^{-1}\Theta(y,\xi)+\nu)\,\Upsilon(|\xi|/\ld)\label{eq:symbol q}\\
  P^{\upsilon}_\theta(z,\eta)=&\alpha_2(z)\,\beta(\theta^{-1}\Theta(z,\eta)+\upsilon)\,
  \Upsilon(|\eta|/\mu)\label{eq:symbol p}
\end{align}
where
$\lambda^{-1/2}\leq \theta\leq 1$, $\nu,\upsilon\in \Z^2$, with  $\beta$ smooth
such that
\begin{equation}
  \label{eq:sum-beta}
  \sum_{\nu\in \Z^2}\beta({}\cdot{}+\nu)=1,\quad\text{supp}\,\beta\subset\{x\in \R^2:|x|\leq 2\},
\end{equation}
and $\Upsilon\in C^\infty_0(\R)$ is
supported in $[c,c^{-1}]$ for some $c>0$.\\

Let us take a look at the symbols
$Q^{\nu}_{\theta}(y,\xi)$ and $P^{\upsilon}_\theta(z,\eta)$.
First, we define $ \beta(\theta^{-1}\Theta(y,\xi)+\nu)$ and
$\beta(\theta^{-1}\Theta(z,\eta)+\upsilon)$
on the cosphere bundle.
Since these two functions are of degree zero in the cotangent variables,
we then extend them homogeneously
to the cotangent bundle with the zero section removed.
The above $Q^{\nu}_{\theta}(y,\xi)$
and $P^{\upsilon}_\theta(z,\eta)$ are well-defined for $\xi\neq 0,\,\eta\neq0$.
Given $\xi$, $\beta(\theta^{-1}\Theta(y,\xi)+\nu)=0$
unless $y $ belongs to a tubular neighborhood
of $\gamma_{\nu}$, where
\[\gamma_\nu=\bigl\{y(\tau):-2\leq \tau\leq 2,\,
  (y(\tau),\xi(\tau))=\Phi_\tau(y,\,\xi),
  \Theta(y,\,\xi)+\theta\nu=0\bigr\}.\]
Moreover, if we set $\nu=(\nu_1,\nu_2)$,
the direction of $\gamma_{\nu}$ at $y(\tau_0)$
is determined by $\theta \nu_{2}$
and is independent of $\ld$.
Since $(y,\xi)=\Phi^{-1}_{\tau_0}(y(\tau_0),\xi(\tau_0))$
and $y(\tau_0)=(y_1(\tau_0),0)$
with $y_1(\tau_0)=\theta\nu_{1}+O(\theta)$,
one easily finds that
$y\in T_{C_1\theta}(\gamma_{\nu})$,
for some $C_1\geq 1$.
Similar statements hold for $P^{\upsilon}_\theta(z,\eta)$.

\medskip

Let $Q^{\nu}_{\theta}(x,D)$, $P^{\upsilon}_{\theta}(x,D)$
 be the pseudo-differential operators
 associated to the symbols defined in \eqref{eq:symbol q} and
  \eqref{eq:symbol p} respectively.
We next record some properties of
$Q^{\nu}_{\theta}(y,D)$ and $P^{\upsilon}_\theta(z,D)$.
The first lemma  indicates that these two kinds of operators provide a natural
microlocal wave-packet decomposition in the phase space
for 2-dimensional manifolds.
\begin{lemma}
  \label{lemma:first microlocalization}
  If $\ld^{-1/2+\eps}\leq \theta\leq 1$ with $\eps>0$ fixed,
  the symbols $Q^{\nu}_\theta$ and $P^{\upsilon}_\theta$ belong to a bounded subset of $S^0_{1/2+\eps,1/2-\eps}$.
%
    Then there is $C_{\eps}$ and $C_2\geq C_1$
  such that for $\lambda^{-1/2+\eps}\leq\theta \leq 1$,
  we have
  \begin{align}
    \label{eq:recurrence1}
   & \|Q^\nu_\theta(x,D)\,f\|_{L^2}
   \leq C_{\eps}\|f\|_{L^2(T_{C_2\theta}(\gamma_\nu))}+C_N\ld^{-N}\|f\|_2\\
    \label{eq:recurrence2}
   & \|P^\upsilon_\theta(x,D)\,g\|_{L^2}
   \leq C_{\eps}\|g\|_{L^2(T_{C_2\theta}(\gamma_\nu))}+C_N\mu^{-N}\|g\|_2.
  \end{align}
    Moreover, for any integer $N\geq 0$, one may write
  \begin{align}
    \label{eq:wavepacket1}\chi_\ld \,f
    =&\sum_{\nu\in\Z^2}\chi_\ld\circ Q^{\nu}_{\theta}(x,D)\,f+R_\ld \,f,\ \ \ {\rm \it if}\ {\rm supp}\,f\subset B(y_0,\delta),\\
    \label{eq:wavepacket2}\chi_\mu\, g
    =&\sum_{\upsilon\in\Z^2}\chi_\mu\circ P^{\upsilon}_{\theta}(x,D)\,g+R_\mu \,g,\ \ \ {\rm \it if}\ {\rm supp}\,g\subset B(z_0,\delta),  \end{align}
  with $\|R_\ld\|_{L^2\rightarrow L^\infty}\lesssim \ld^{-N},
  \|R_\mu\|_{L^2\rightarrow L^\infty}\lesssim \mu^{-N}$.
\end{lemma}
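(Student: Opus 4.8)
The plan is to establish Lemma~\ref{lemma:first microlocalization} in three stages: first the symbol class membership, then the localization estimates \eqref{eq:recurrence1}--\eqref{eq:recurrence2}, and finally the wave-packet decompositions \eqref{eq:wavepacket1}--\eqref{eq:wavepacket2}. For the symbol class, I would examine the composition $\theta^{-1}\Theta(y,\xi)+\nu$ appearing in the argument of $\beta$. The map $\Theta$ is smooth on $S^*M$ and extended homogeneously of degree zero, so $\partial_y^\alpha\partial_\xi^\beta\Theta$ is bounded by $C_{\alpha\beta}|\xi|^{-|\beta|}$ on the support of the cutoffs. Each $y$-derivative of $\beta(\theta^{-1}\Theta+\nu)$ costs a factor $\theta^{-1}\le\lambda^{1/2-\eps}$, and each $\xi$-derivative costs $\theta^{-1}|\xi|^{-1}\lesssim\lambda^{1/2-\eps}|\xi|^{-1}$; combined with the support condition $|\xi|\approx\lambda$ from $\Upsilon(|\xi|/\lambda)$, one reads off exactly the $S^0_{1/2+\eps,1/2-\eps}$ bounds $|\partial_y^\alpha\partial_\xi^\beta Q^\nu_\theta|\lesssim\lambda^{(1/2-\eps)|\alpha|}\lambda^{-(1/2+\eps)|\beta|}$, uniformly in $\nu$ (and the analogous statement for $P^\upsilon_\theta$ with $\mu$). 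Here I would invoke the standard Calder\'on--Vaillancourt / Fefferman--Phong machinery for this exotic symbol class, valid since $1/2-\eps<1/2+\eps$.

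For the localization estimates, the key point is the geometric observation already recorded in the excerpt: on the support of $Q^\nu_\theta(y,\xi)$ one has $y\in T_{C_1\theta}(\gamma_\nu)$. I would write $Q^\nu_\theta(x,D)f(x)=\int K_\nu(x,y)f(y)\,dy$ and split $f=f\cdot\mathbf{1}_{T_{C_2\theta}(\gamma_\nu)}+f\cdot\mathbf{1}_{T_{C_2\theta}(\gamma_\nu)^c}$ for $C_2$ slightly larger than $C_1$. The first piece is controlled by $L^2$-boundedness of $Q^\nu_\theta(x,D)$ (from the symbol class, after composing with a fixed cutoff to $B(y_0,2\delta)$), giving the $\|f\|_{L^2(T_{C_2\theta}(\gamma_\nu))}$ term. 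For the second piece one needs that $K_\nu(x,y)$ is rapidly decreasing when $y$ lies outside the slightly fattened tube but $x$ stays in $B(0,\delta)$: because the symbol in $\xi$ is supported where $|\Theta(y,\xi)+\theta\nu|\lesssim\theta$ with $|\xi|\approx\lambda$, non-stationary phase in $\xi$ over a region of size $\approx\lambda$ combined with the separation of $y$ from $\gamma_\nu$ by a definite multiple of $\theta\ge\lambda^{-1/2+\eps}$ produces an arbitrarily large negative power of $\lambda$; integrating against $\|f\|_2$ yields the $C_N\lambda^{-N}\|f\|_2$ error.

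The wave-packet decomposition then follows by writing $\sum_\nu Q^\nu_\theta(x,D)=\Upsilon(|D|/\lambda)\,\alpha_1 + (\text{error})$: by \eqref{eq:sum-beta} the sum of the $\beta(\theta^{-1}\Theta+\nu)$ telescopes to $1$, so $\sum_\nu Q^\nu_\theta(y,\xi)=\alpha_1(y)\Upsilon(|\xi|/\lambda)$ exactly. Since $\chi_\lambda$ has symbol essentially supported where $|\xi|\approx\lambda$, one has $\chi_\lambda=\chi_\lambda\circ\Upsilon(|D|/\lambda)$ modulo an $O(\lambda^{-N}\colon L^2\to L^\infty)$ remainder, and for $f$ supported in $B(y_0,\delta)$ one has $\chi_\lambda f=\chi_\lambda\circ\alpha_1\cdot f$ up to a similar remainder because $\alpha_1\equiv1$ there; composing gives $\chi_\lambda f=\sum_\nu\chi_\lambda\circ Q^\nu_\theta(x,D)f+R_\lambda f$ with $R_\lambda$ of the claimed mapping property, the $L^2\to L^\infty$ bound coming from the $\lambda^{1/2}$ loss of $\chi_\lambda$ against a gain of $\lambda^{-N'}$ from the pseudodifferential remainders. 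I expect the main obstacle to be making the rapid-decay estimate for the off-tube kernel $K_\nu(x,y)$ fully rigorous in the exotic symbol class $S^0_{1/2+\eps,1/2-\eps}$: the integration by parts in $\xi$ must be done carefully because each derivative of the symbol only gains $\theta^{-1}|\xi|^{-1}=\lambda^{-1/2-\eps}\cdot\lambda^{1/2-\eps}\cdot\lambda^{-2\eps}$-type factors rather than a clean $|\xi|^{-1}$, so one must verify that the phase gradient $|\nabla_\xi(\lambda d_\g(x,y))|\gtrsim\lambda\cdot\operatorname{dist}(y,\gamma_\nu)\gtrsim\lambda^{1/2+\eps}$ genuinely beats the symbol growth $\lambda^{1/2-\eps}$ per derivative, with room to spare — which it does precisely because $\eps>0$, and this is exactly where the $\eps$-loss in the theorem enters.
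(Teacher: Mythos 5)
Your proposal follows the same route as the paper's proof, which itself largely defers to Blair--Sogge~\cite{Blair Sogge:refine}: symbol-class membership by direct differentiation, tube localization from rapid off-diagonal kernel decay, and the wave-packet identity from the telescoping sum~\eqref{eq:sum-beta} combined with the frequency localization of $\chi_\lambda$. Two small slips are worth tightening. First, for $P^\upsilon_\theta$ you say ``the analogous statement with $\mu$,'' but the angular scale $\theta\geq\lambda^{-1/2+\eps}$ is calibrated to $\lambda$ while $\Upsilon(|\eta|/\mu)$ forces $|\eta|\approx\mu$; membership in $S^0_{1/2+\eps,1/2-\eps}$ therefore genuinely uses $\lambda\leq\mu$, via $\theta^{-1}|\eta|^{-1}\lesssim\lambda^{1/2-\eps}\mu^{-1}\leq\mu^{-(1/2+\eps)}$ --- a point the paper singles out explicitly. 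Second, the phase in the kernel of the pseudodifferential operator $Q^\nu_\theta(x,D)$ is $(x-y)\cdot\xi$ (equivalently $\lambda(x-y)\cdot\eta$ after rescaling $\eta=\xi/\lambda$), not $\lambda d_\g(x,y)$; the latter is the phase of the composed kernel $(\chi_\ld Q^\nu_\theta)(x,y)$ in Lemma~\ref{lemma:Kernels}. Your conclusion still holds: on the support of the symbol one has $x\in T_{C_1\theta}(\gamma_\nu)$, so $y\notin T_{C_2\theta}(\gamma_\nu)$ with $C_2\gg C_1$ forces $|x-y|\gtrsim\theta$; after rescaling the phase gradient is $\lambda|x-y|\gtrsim\lambda\theta\geq\lambda^{1/2+\eps}$ while each $\eta$-derivative of the symbol costs at most $\theta^{-1}\leq\lambda^{1/2-\eps}$, so each integration by parts gains a factor $\lambda^{-2\eps}$, which is exactly the $\eps>0$ margin you correctly identify as driving the rapid decay.
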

\begin{proof}
  That $Q^{\nu}_{\theta}(y,\xi)\in S^0_{1/2+\eps,1/2-\eps}$
  has already been proved in \cite{Blair Sogge:refine}.
   If we use $\mu\geq\ld$, we get $\mu^{-1}\ld^{1/2-\eps}\leq \mu^{-1/2-\eps}$,
   and the same calculation as for $Q^{\nu}_{\theta}$
   yields that the $P^{\upsilon}_{\theta}(z,D)$
   belong to a bounded subset of pseudodifferential operators of order zero and type $(1/2+\eps,1/2-\eps)$.
  To see \eqref{eq:recurrence1},
  one observes that the kernel $K^\nu_{\theta}(x,y)$
  of the operator $Q^{\nu}_{\theta}$ is bounded by
  $O(\ld^{-N})$ if $y$ does not belong to
  $T_{C_2\theta}(\gamma_\nu)$ for some large $C_2>C_1$
  by using integration by parts.
  We can deduce  \eqref{eq:wavepacket1} from  \eqref{eq:sum-beta}.
  In fact, if we recall the process of constructing parametrix for the half wave
  operator $e^{it\sqrt{-\Delta_{\g}}}$ in \cite{Sogge:93book},
  we may use integration by parts to see that  in \eqref{eq:wavepacket1},
  one may assume $\widehat f(\xi)=0$ if
 $|\xi|\not\in [c\lambda,\,C\lambda]$ up to some terms of the form $\mathcal R_{\lambda}f$.
 It suffices to see the difference of $f(x)$ and $\sum_{\nu}Q^{\nu}_{\theta}(x,D)f(x)$
 is of the form $\mathcal R_{\lambda}f(x)$.
 This is easy due to the fact that $\Upsilon(|\xi|/\ld)=1$ on the support of $\widehat f$
 by choosing suitable $c,\,C$ and $(1-\alpha_{1}(x))f(x)=0$. Now \eqref{eq:sum-beta} yields
 $$\alpha_{1}(x)f(x)=\sum_{\nu}Q^{\nu}_{\theta}(x,D)f(x).$$
  Similar argument yields \eqref{eq:recurrence2}
  and \eqref{eq:wavepacket2} .
\end{proof}

Now, we recall the microlocal Kakeya-Nikodym norm in  \cite{Blair Sogge:refine},
corresponding to frequency $\ld$ and  $\theta_0=\ld^{-1/2+\eps_0}$
\begin{equation}
  \label{def:MKN}
  \vertiii{f}_{MKN(\ld,\eps_0)}=\sup_{\theta_0\leq \theta\leq 1}
  \Bigl(\sup_{\nu\in\Z^2}\theta^{-1/2}\|Q^\nu_{\theta}(x,D)f\|_{L^2(\R^2)}\Bigr)+\|f\|_{L^2(\R^2)}.
\end{equation}
As  pointed out in \cite{Blair Sogge:refine},
the maximal microlocal concentration of $f$ about all
unit geodesics in the scale of $\theta$ amounts to the quantity
\[\sup_{\nu\in\Z^2}\theta^{-1/2}\|Q^\nu_{\theta}(x,D)f\|_{L^2(\R^2)}.\]
From Lemma \ref{lemma:first microlocalization}, one can prove
 $\vertiii{f}_{MKN(\ld,\eps_0)}\leq C_{\eps_0}\vertiii{f}_{KN(\ld,\eps_0)}$.
 We refer to \cite{Blair Sogge:refine} for more details. Similarly, for the same $\theta_0$, we can define
 \begin{equation}
  \label{def:MKN'}
  \vertiii{g}_{MKN'(\ld,\eps_0)}=\sup_{\theta_0\leq \theta\leq 1}
  \Bigl(\sup_{\nu\in\Z^2}\theta^{-1/2}\|P^\nu_{\theta}(x,D)g\|_{L^2(\R^2)}\Bigr)+\|g\|_{L^2(\R^2)},
\end{equation}
again by Lemma \ref{lemma:first microlocalization}, we see that $\vertiii{g}_{MKN'(\ld,\eps_0)}\leq C_{\eps_0}\vertiii{g}_{KN(\ld,\eps_0)}$.

\medskip

We will use the following fact in the next section.
\begin{lemma}
  \label{lemma:symbol class of P Q}
  For any $\eps>0$,
   there exists some $C_{\eps}>0$ such that for all $\ld^{-1/2+\eps}\leq \theta\leq 1$, 
  \begin{equation}
    \label{eq:L2 bdd of composed operators}
    \bigl\|\sum_{\nu}(Q^\nu_\theta)^*\circ Q^\nu_\theta\, f\bigr\|_{L^2}\leq C_{\eps}\|f\|_{L^2},\;
    \bigl\|\sum_{\upsilon}(P^\upsilon_\theta)^*\circ P^\upsilon_\theta\, g\bigr\|_{L^2}\leq C_{\eps}\|g\|_{L^2}.
  \end{equation}
\end{lemma}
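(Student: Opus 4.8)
The plan is to realize the finite sum $\sum_{\nu}(Q^\nu_\theta)^*\circ Q^\nu_\theta$ as a single pseudodifferential operator whose symbol lies in a bounded subset of $S^0_{1/2+\eps,1/2-\eps}$ — with bounds uniform in $\ld\ge1$ and in $\ld^{-1/2+\eps}\le\theta\le1$ — and then invoke Calder\'on--Vaillancourt. The sum is genuinely finite: for fixed $\ld,\theta$ the symbol $Q^\nu_\theta$ is not identically zero only for $\nu$ in a ball of radius $O(\theta^{-1})$, since $\Theta$ stays bounded on the part of phase space where $\alpha_1$ and $\Upsilon(|\xi|/\ld)$ do not vanish; thus there are at most $O(\theta^{-2})\le O(\ld^{1-2\eps})$ nonzero terms.

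First I would strip off the frequency tails. Fix a Fourier multiplier $\Lambda_\ld$ with real symbol equal to $1$ on the common $\xi$-support $\{|\xi|\sim\ld\}$ of the $Q^\nu_\theta$ and supported in a slightly larger annulus at the same scale. Because the symbol of $Q^\nu_\theta$ carries the exact factor $\Upsilon(|\xi|/\ld)$, one has the \emph{exact} identities $Q^\nu_\theta\circ\Lambda_\ld=Q^\nu_\theta$ and, taking adjoints ($\Lambda_\ld$ being self-adjoint), $(Q^\nu_\theta)^*=\Lambda_\ld\circ(Q^\nu_\theta)^*$, so that
\[
\sum_{\nu}(Q^\nu_\theta)^*\circ Q^\nu_\theta=\sum_{\nu}B^\nu_\theta,\qquad B^\nu_\theta:=\Lambda_\ld\circ(Q^\nu_\theta)^*\circ Q^\nu_\theta\circ\Lambda_\ld .
\]
The right multiplier $\Lambda_\ld$ forces the symbol $b^\nu_\theta$ of $B^\nu_\theta$ to equal $\sigma\bigl(\Lambda_\ld\circ(Q^\nu_\theta)^*\circ Q^\nu_\theta\bigr)$ times the symbol of $\Lambda_\ld$; hence every $b^\nu_\theta$ is supported in the fixed annulus $\{|\xi|\sim\ld\}$, and since by Lemma \ref{lemma:first microlocalization} the $Q^\nu_\theta$ form a bounded subset of $S^0_{1/2+\eps,1/2-\eps}$ (where the $(\rho,\delta)$-calculus is legitimate since $0\le1/2-\eps<1/2+\eps\le1$ for $0<\eps\le\tfrac12$), the $b^\nu_\theta$ form a bounded subset of $S^0_{1/2+\eps,1/2-\eps}$, with seminorms independent of $\nu,\ld,\theta$.

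The heart of the matter is that the sum $\sum_{\nu}b^\nu_\theta$ \emph{still} lies in a bounded subset of $S^0_{1/2+\eps,1/2-\eps}$, despite the $O(\ld^{1-2\eps})$ summands. The point is that at any fixed $(x,\xi)$ the symbol $Q^\nu_\theta(x,\xi)$ vanishes for all but $O(1)$ values of $\nu$, because $\supp\beta\subset\{|\cdot|\le2\}$. For those $O(1)$ values one simply uses the uniform $S^0_{1/2+\eps,1/2-\eps}$-bound on $b^\nu_\theta$ (note that an $x$-derivative of $\beta(\theta^{-1}\Theta+\nu)$ costs only $\theta^{-1}\le\ld^{1/2-\eps}\sim\langle\xi\rangle^{1/2-\eps}$, which is exactly why $1/2-\eps$ is the correct $x$-exponent). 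For each of the remaining $O(\ld^{1-2\eps})$ indices one has $(x,\xi)\notin\supp Q^\nu_\theta$, so every term of the asymptotic expansion of the composition symbol of $(Q^\nu_\theta)^*\circ Q^\nu_\theta$ vanishes at $(x,\xi)$, and the remainder estimate of the $(\rho,\delta)$-calculus, combined with the localization $|\xi|\sim\ld$, yields $|\partial^\alpha_\xi\partial^\beta_x b^\nu_\theta(x,\xi)|\le C_{M,\alpha,\beta}\,\ld^{-2\eps M}\langle\xi\rangle^{-(1/2+\eps)|\alpha|+(1/2-\eps)|\beta|}$ for every $M$, uniformly in $\nu$. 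Summing and taking $M$ with $2\eps M\ge1$ gives $|\partial^\alpha_\xi\partial^\beta_x\sum_{\nu}b^\nu_\theta|\le C_\eps\langle\xi\rangle^{-(1/2+\eps)|\alpha|+(1/2-\eps)|\beta|}$, so Calder\'on--Vaillancourt gives $\|\sum_{\nu}(Q^\nu_\theta)^*\circ Q^\nu_\theta\|_{L^2\to L^2}\le C_\eps$. The estimate for $\sum_{\upsilon}(P^\upsilon_\theta)^*\circ P^\upsilon_\theta$ is word for word the same with $\ld$ replaced by $\mu$ (the annulus becomes $\{|\xi|\sim\mu\}$, the remainder gain becomes $\mu^{-2\eps M}$): by Lemma \ref{lemma:first microlocalization} the $P^\upsilon_\theta$ form a bounded subset of $S^0_{1/2+\eps,1/2-\eps}$ precisely because $\mu\ge\ld$, the number of nonzero terms is still $O(\theta^{-2})\le O(\ld^{1-2\eps})$, and $\ld^{1-2\eps}\mu^{-2\eps M}\le\ld^{1-2\eps-2\eps M}\to0$.

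The step I expect to be the real obstacle is this last uniform remainder bound for the off-support terms: one must make sure that the constants $C_{M,\alpha,\beta}$ produced by the $(\rho,\delta)$-symbolic calculus depend only on finitely many of the (uniformly controlled) $S^0_{1/2+\eps,1/2-\eps}$-seminorms of the $Q^\nu_\theta$ — so that they are genuinely independent of $\ld,\theta,\nu$ — and that the factor $\ld^{-2\eps M}$ gained from disjointness of supports really does outweigh the $O(\ld^{1-2\eps})$ terms. Everything else is routine calculus of pseudodifferential operators.
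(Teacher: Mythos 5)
Your proposal is correct and takes essentially the same route as the paper: the paper dispatches this lemma in one line by citing the partition $\sum_\nu\beta(\cdot+\nu)=1$ (which gives the bounded-overlap property you use) together with the classical $S^0_{1/2+\eps,1/2-\eps}$ calculus, and your argument is precisely a careful unpacking of that citation, organizing $\sum_\nu(Q^\nu_\theta)^*\circ Q^\nu_\theta$ into a single $(\rho,\delta)$-symbol via the near/far split at each $(x,\xi)$ and then invoking Calder\'on--Vaillancourt. The uniformity points you flag at the end are the right ones to worry about, and they are indeed handled by the standard continuity of the $(\rho,\delta)$-symbolic calculus in its seminorms.
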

\begin{proof}
   The $L^2$-estimates 
   \eqref{eq:L2 bdd of composed operators} are valid thanks to \eqref{eq:sum-beta}
   and the  classical  calculus of
   pseudo-differential operators of type $(1/2+\varepsilon, 1/2-\varepsilon)$ with $\varepsilon>0$.
\end{proof}

We describe next the kernels of the operators  $\chi_\ld Q^{\nu}_{\theta}:=(\chi_\ld\circ Q^{\nu}_{\theta})(x,D)$ and $\chi_\mu P^{\upsilon}_{\theta}:=(\chi_\mu\circ P^{\upsilon}_{\theta})(x,D)$ following \cite{Blair Sogge:refine}.
\begin{lemma}
  \label{lemma:Kernels}
  Denote by $(\chi_\ld Q^\nu_\theta)(x,y)$ and $(\chi_\mu P^\upsilon_\theta)(x,z)$ the kernels of the pseudodifferential operators
  $\chi_\ld Q^\nu_\theta(x,D)$ and $\chi_\mu P^\upsilon_\theta(x,D)$ respectively. Assume $\theta\in[C_0\theta_0,1]$ with $\theta_0=\ld^{-1/2+\eps}$ and $C_0\gg 1$.
  We can find a uniform constant $C$ so that for each $N=1,2,3,\ldots$, we have
  \begin{equation}
    \label{eq:kernels decay Q}
    |(\chi_\ld Q^\nu_\theta)(x,y)|\leq C_N \ld^{-N},\;\text{if}\;x\not\in T_{C\theta}(\gamma_\nu) \;\text{or}\;y\not\in T_{C\theta}(\gamma_\nu),
  \end{equation}
  and
  \begin{equation}
    \label{eq:kernels decay P}
    |(\chi_\mu P^\upsilon_\theta)(x,z)|\leq C_N \mu^{-N},\;\text{if}\;x\not\in T_{C\theta}(\gamma_\upsilon) \;\text{or}\;z\not\in T_{C\theta}(\gamma_\upsilon).
  \end{equation}
  Furthermore,
  \begin{equation}
    \label{eq:osc-Q}
    (\chi_\ld Q^\nu_\theta)(x,y)=\ld^\frac12e^{i\ld d_g(x,y)}a_{\nu,\theta}(x,y)+O_N(\ld^{-N}),
  \end{equation}
  \begin{equation}
    \label{eq:osc-P}
    (\chi_\mu P^\upsilon_\theta)(x,z)=\mu^\frac12e^{i\mu d_g(x,z)}b_{\upsilon,\theta}(x,z)+O_N(\mu^{-N}),
  \end{equation}
  where we have the uniform bounds
  \begin{equation}
    \label{eq:a b estimate}
    |({\nabla^{\bot}_x})^\alpha a_{\nu,\theta}(x,y)|\leq C_\alpha \theta^{-|\alpha|},\;
    |({\nabla^{\bot}_x})^\alpha b_{\upsilon,\theta}(x,z)|\leq C_\alpha \theta^{-|\alpha|},
  \end{equation}
  and
  \begin{equation}
    \label{eq:a estimate along curve}
    |\partial^j_t a_{\nu,\theta}(x,x_\nu(t))|\leq C_j, x\in \gamma_\nu=\{x_\nu(t)\},
  \end{equation}
   \begin{equation}
    \label{eq:b estimate along curve}
   |\partial^\ell_t b_{\upsilon,\theta}(x,x_\upsilon(t))|\leq C_\ell, x\in \gamma_\upsilon=\{x_\upsilon(t)\},
  \end{equation}
  where $\nabla^{\bot}_{x}$ denotes the directional derivative along the direction perpendicular to
  the geodesics $\{x_{\nu}(t)\}$ with $\nu=\nu$ or $\upsilon$ and
  \[\gamma_\nu=\bigl\{z_\nu(\tau):-2\leq \tau\leq 2,(z_\nu(\tau),\eta_\nu(\tau))=\Phi_\tau(z_\nu,\eta_\nu),
  \theta^{-1}\Theta(z_\nu,\eta_\nu)+\nu=0\bigr\}.\]
\end{lemma}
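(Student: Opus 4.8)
The statement to prove is Lemma \ref{lemma:Kernels}, which describes the kernels of the composed operators $\chi_\ld Q^\nu_\theta(x,D)$ and $\chi_\mu P^\upsilon_\theta(x,D)$. The natural route, following \cite{Blair Sogge:refine}, is to start from the standard Fourier integral representation of $\chi_\ld$ as in Section 2, namely $\chi_\ld f(x) = \ld^{1/2}\int e^{i\ld d_\g(x,y)} a_\ld(x,w) f(w)\,dw + \mathcal R_\ld f(x)$ with $a_\ld$ supported where $d_\g(x,w)\in(1,2)$, and compose it with the pseudodifferential operator $Q^\nu_\theta(x,D)$. Writing out the composition as an oscillatory integral in the variables $(w,\xi)$, the phase is $\ld d_\g(x,w) + \langle w-y,\xi\rangle$ after inserting the symbol $Q^\nu_\theta(w,\xi)$, which by \eqref{eq:symbol q} is supported where $|\xi|\approx\ld$ and where $w$ lies in a $C_1\theta$-tube about $\gamma_\nu$. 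One rescales $\xi = \ld\zeta$ so the phase becomes $\ld(d_\g(x,w) + \langle w-y,\zeta\rangle)$ with $|\zeta|\approx 1$.

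\textbf{Key steps.} First, a stationary phase / non-stationary phase analysis in $(w,\zeta)$: the $\zeta$-critical point forces $w=y$, and the $w$-critical point forces $\nabla_w d_\g(x,w) = -\zeta$, i.e. $\zeta$ is the (co)direction of the geodesic from $x$ to $y$. Away from this critical configuration, repeated integration by parts gains powers of $\ld^{-1}$; combined with the support restrictions this yields \eqref{eq:kernels decay Q}: if $y\notin T_{C\theta}(\gamma_\nu)$ the symbol support is empty (up to $O(\ld^{-N})$), and if $x\notin T_{C\theta}(\gamma_\nu)$ then, since on the symbol support $y\in T_{C_1\theta}(\gamma_\nu)$ and the geodesic from $x$ to $y$ must be nearly the direction $\theta\nu_2$ for the $\beta$-factor to be nonzero, $x$ is also forced into a slightly larger tube — violating the hypothesis, so again the kernel is $O(\ld^{-N})$. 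Second, applying stationary phase at the critical point (which is nondegenerate by the standard geometry of the distance function, essentially Lemma \ref{lemma:cosphere} / Gauss' lemma) produces the leading term $\ld^{1/2} e^{i\ld d_\g(x,y)} a_{\nu,\theta}(x,y)$ with a remainder $O_N(\ld^{-N})$, giving \eqref{eq:osc-Q}. Third, one tracks the dependence of $a_{\nu,\theta}$ on $\theta$: the only $\theta$-dependence in the symbol is through $\beta(\theta^{-1}\Theta(w,\xi)+\nu)$, so each $\partial_w$ (in directions transverse to $\gamma_\nu$, where $\Theta$ genuinely varies at unit rate) costs a factor $\theta^{-1}$, yielding \eqref{eq:a b estimate}; along the geodesic $\gamma_\nu$ itself $\Theta$ is constant, so tangential derivatives cost nothing, yielding \eqref{eq:a estimate along curve}. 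The statements \eqref{eq:kernels decay P}, \eqref{eq:osc-P}, \eqref{eq:b b estimate}, \eqref{eq:b estimate along curve} for $\chi_\mu P^\upsilon_\theta$ follow by the identical argument with $\ld$ replaced by $\mu$; crucially, since $Q^\nu_\theta$ and $P^\upsilon_\theta$ both live in $S^0_{1/2+\eps,1/2-\eps}$ (Lemma \ref{lemma:first microlocalization}) with $\theta\ge\ld^{-1/2+\eps}$ — and here one uses $\mu\ge\ld$ so that $\mu^{-1}\ld^{1/2-\eps}\le\mu^{-1/2-\eps}$, as in the proof of Lemma \ref{lemma:first microlocalization} — all the symbol estimates are uniform in the relevant ranges.

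\textbf{Main obstacle.} The delicate point is bookkeeping the $\theta$-dependence of the amplitude $a_{\nu,\theta}$ while simultaneously keeping the oscillatory-integral error terms genuinely $O(\ld^{-N})$ rather than merely $O((\ld\theta^2)^{-N})$ or similar. Because the symbol is of type $(1/2+\eps,1/2-\eps)$, each integration by parts in $w$ gains $\ld^{-1}$ but can cost $\theta^{-1}\le\ld^{1/2-\eps}$, so one nets $\ld^{-1/2-\eps}$ per step — still negative power, which is why the condition $\theta\ge\ld^{-1/2+\eps}$ is essential and why one must be careful to use the \emph{transverse} derivatives of $\Theta$ (cost $\theta^{-1}$) rather than letting derivatives fall carelessly. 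Separating the tangential and transverse behavior of $\Theta$ along $\gamma_\nu$ — precisely the content of Remark \ref{rmk} that $\Theta$ is constant along geodesic orbits — is what makes \eqref{eq:a estimate along curve} and \eqref{eq:b estimate along curve} come out with $\theta$-independent constants, and handling this cleanly (rather than through brute-force differentiation of the implicitly-defined $\tau_0$) is the crux of the proof. Once this is set up correctly, everything reduces to the estimates already established in \cite{Blair Sogge:refine} for $\chi_\ld Q^\nu_\theta$, together with the observation that the $\mu$-side is identical under the hypothesis $\ld\le\mu$.
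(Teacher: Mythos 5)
Your proposal is correct and follows essentially the same route as the paper; the paper's own proof is a one-line citation to Lemma~3.2 of Blair--Sogge (\cite{Blair Sogge:refine}), and your sketch is a faithful reconstruction of the content of that cited argument (composition of the $\chi_\ld$ FIO with the pseudodifferential cutoff, non-stationary phase for the kernel decay, stationary phase for the leading oscillatory term, and separation of tangential versus transverse derivatives of $\Theta$ for the amplitude estimates). In particular, you correctly identify the one point that the paper emphasizes as the reason the $P^\upsilon_\theta$ case goes through, namely $\theta\ge\ld^{-1/2+\eps}\ge\mu^{-1/2+\eps}$, so all symbol estimates are uniform when $\ld$ is replaced by $\mu$.
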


\begin{proof}
  The properties for $(\chi_\ld Q^\nu_\theta)(x,y)$ are exactly the same as in
  \cite{Blair Sogge:refine}, and the proof is identical to that of Lemma 3.2 in
  \cite{Blair Sogge:refine}. Since $\theta\ge \mu^{-\frac1 2+\eps}$, the properties for $(\chi_\ld P^\upsilon_\theta)(x,z)$ follows from the same proof.
\end{proof}

On account of the above lemma, we have the following fact which will be used in the 
next section.
\begin{lemma}
  \label{lemma:orthogonality}

Assume $\theta\ge\theta_0$ and $N_1$ is fixed. Then there exists $C_0\gg 1$, when $|\nu-\tilde\nu|+|\upsilon-\tilde\upsilon|\ge C_0$ and $|\nu-\upsilon|,|\tilde\nu-\tilde\upsilon|\le N_1$, we have
\[\left|\int \chi_\lambda Q_\theta^\nu h_1(x)\,\chi_\mu P_\theta^{\upsilon} h_2(x)\,\overline{\chi_\lambda Q_\theta^{\tilde\nu} h_3(x)}\,\overline{\chi_\mu P_\theta^{\tilde\upsilon} h_4(x)}\,dx\right|\le C_N\mu^{-N}\prod_{j=1}^4\|h_j\|_2.\]

\end{lemma}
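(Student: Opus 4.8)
The plan is to reduce the claimed four-fold orthogonality estimate to a support/integration-by-parts argument built on Lemma~\ref{lemma:Kernels}. First I would write each factor using the oscillatory-integral representation \eqref{eq:osc-Q}--\eqref{eq:osc-P}, so that
\[
\chi_\lambda Q_\theta^\nu h_1(x)=\lambda^{\frac12}\int e^{i\lambda d_\g(x,y)}a_{\nu,\theta}(x,y)h_1(y)\,dy+O_N(\lambda^{-N})\|h_1\|_2,
\]
and similarly for the other three factors, with the error terms contributing harmlessly to the right-hand side after one uses $\|h_j\|_2$ bounds and the $L^\infty$ decay in Lemma~\ref{lemma:first microlocalization}. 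By \eqref{eq:kernels decay Q}--\eqref{eq:kernels decay P}, the integrand in $x$ is $O_N(\mu^{-N})$ (times products of the $h_j$'s) unless $x$ lies in the intersection $T_{C\theta}(\gamma_\nu)\cap T_{C\theta}(\gamma_\upsilon)\cap T_{C\theta}(\gamma_{\tilde\nu})\cap T_{C\theta}(\gamma_{\tilde\upsilon})$. The first thing to observe is that $|\nu-\upsilon|\le N_1$ and $|\tilde\nu-\tilde\upsilon|\le N_1$ force $\gamma_\nu$ and $\gamma_\upsilon$ (resp. $\gamma_{\tilde\nu}$, $\gamma_{\tilde\upsilon}$) to be within $O(N_1\theta)$ of each other, while $|\nu-\tilde\nu|+|\upsilon-\tilde\upsilon|\ge C_0$ forces $\gamma_\nu$ and $\gamma_{\tilde\nu}$ to be separated by roughly $C_0\theta$ in the $\Theta$-distance of Remark~\ref{rmk}.

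Next I would split into the two geometrically distinct cases according to how the separation $|\nu-\tilde\nu|+|\upsilon-\tilde\upsilon|\ge C_0$ is realized, using the two components of $\nu=(\nu_1,\nu_2)$. If the separation is in the $\nu_2$-component (the direction component), then $\gamma_\nu$ and $\gamma_{\tilde\nu}$ make an angle $\gtrsim C_0\theta$ at their point of near-intersection; this is exactly the transversality that makes the phase $\Psi=\lambda(d_\g(x,y)-d_\g(x,\tilde y))+\mu(d_\g(x,z)-d_\g(x,\tilde z))$ (after freezing $y,z,\tilde y,\tilde z$ in the support tubes) have gradient in $x$ of size $\gtrsim \lambda C_0\theta$ in the transversal direction, because $\nabla_x d_\g(x,y)$ and $\nabla_x d_\g(x,\tilde y)$ point in genuinely different directions by Lemma~\ref{lemma:cosphere}/Gauss' lemma; integration by parts in $x$ using the amplitude bounds \eqref{eq:a b estimate} (which cost only $\theta^{-1}$ per derivative) then gains $(\lambda C_0\theta\cdot\theta)^{-N}=(\lambda C_0\theta^2)^{-N}$, and since $\theta\ge\theta_0=\lambda^{-1/2+\eps}$ one has $\lambda\theta^2\ge\lambda^{2\eps}$, yielding the desired $\mu^{-N}$ decay after absorbing the $\lambda^{1/2}\mu^{1/2}$ prefactors and choosing $N$ large and $C_0$ large. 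If instead the separation is in the $\nu_1$-component (the position component along $\gamma^\bot$), then the two tubes $T_{C\theta}(\gamma_\nu)$ and $T_{C\theta}(\gamma_{\tilde\nu})$ are parallel but their footpoints on $\gamma^\bot$ are $\gtrsim C_0\theta$ apart, so for $C_0$ large enough the intersection of all four tubes is empty, and the kernel bound \eqref{eq:kernels decay Q} alone gives the $\mu^{-N}$ decay directly with no stationary-phase analysis needed.

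The main obstacle I anticipate is bookkeeping the angular-separation case carefully: one must check that the lower bound $|\nabla_x\Psi|\gtrsim \lambda\theta\,(|\nu_2-\tilde\nu_2|+\cdots)$ genuinely survives after replacing the exact geodesics $\gamma_\nu$ etc. by the $O(N_1\theta)$-thickened tubes containing the actual integration variables $y,z,\tilde y,\tilde z$, i.e. that the $N_1$-bounded perturbations $|\nu-\upsilon|,|\tilde\nu-\tilde\upsilon|\le N_1$ do not destroy the transversality coming from $|\nu-\tilde\nu|+|\upsilon-\tilde\upsilon|\ge C_0$ once $C_0\gg N_1$. This is exactly the same mechanism as in \eqref{eq:kernel good} in the proof sketch of Proposition~\ref{prop:reduction} — a non-stationary phase estimate with the phase having gradient comparable in size to all of its higher derivatives modulo the $\theta^{-1}$ weights — and I would model the integration by parts on that computation, tracking the two frequencies $\lambda,\mu$ separately and using $\mu\ge\lambda$ together with $\theta\ge\lambda^{-1/2+\eps}$ to convert every gained power of $(\lambda\theta^2)^{-1}$ into a power of $\mu^{-1}$ at the cost of a fixed power of $\mu$ from the prefactors. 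Finally, one sums trivially: since the statement is a bound for a single fixed tuple $(\nu,\upsilon,\tilde\nu,\tilde\upsilon)$, no summation over lattice points is required here, so once the per-tuple bound is established the lemma is proved.
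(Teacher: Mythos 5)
Your proposal misses the crucial difficulty in this lemma: you must produce $O_N(\mu^{-N})$ decay, not merely $O_N(\lambda^{-N})$, and $\mu$ is allowed to be super-polynomially larger than $\lambda$ (nothing in the hypotheses caps $\mu$ in terms of $\lambda$). Your integration-by-parts bookkeeping gives, in the angular case, a gain of $(\lambda\theta^2)^{-1}\lesssim\lambda^{-2\eps}$ per derivative; no number of such gains can ever produce $\mu^{-N}$ when, say, $\mu\ge 2^\lambda$. Moreover your asserted lower bound $|\nabla_x\Psi|\gtrsim\lambda C_0\theta$ is not justified: the full gradient is $\lambda(Y-\tilde Y)+\mu(Z-\tilde Z)$ with $Y=\nabla_x d_\g(x,y)$ etc., and when $\mu\gg\lambda$ the $\mu$-term dominates in magnitude, so one must either rely solely on the $\mu$-oscillation or rule out cancellation between the two pieces — neither of which you do.

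The paper resolves exactly this issue by first observing, via the triangle inequality, that the separation hypotheses force \emph{both} $|\nu-\tilde\nu|$ and $|\upsilon-\tilde\upsilon|$ to be $\gtrsim C_0$; in particular $|\upsilon-\tilde\upsilon|\ge\tfrac12 C_0-N_1$, so the $\mu$-frequency part of the phase is always genuinely non-degenerate. It then splits on $\mu\ge\lambda^2$ versus $\mu\le\lambda^2$. When $\mu\ge\lambda^2$, only the representation \eqref{eq:osc-P} is inserted, leaving the kernels $\chi_\lambda Q^\nu_\theta(x,y)$, $\overline{\chi_\lambda Q^{\tilde\nu}_\theta(x,\tilde y)}$ inside the amplitude; these cost only $O(\lambda)$ per $x$-derivative while the phase gradient is $\gtrsim\mu\theta$, so each integration by parts gains $\lambda/(\mu\theta)\lesssim\lambda^{-1/2}\lesssim\mu^{-1/4}$, which does yield $\mu^{-N}$. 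When $\mu\le\lambda^2$, both representations are inserted and the paper proves the claim
\[
\Bigl|(Y+\tfrac{\mu}{\lambda}Z)-(\tilde Y+\tfrac{\mu}{\lambda}\tilde Z)\Bigr|\ge c\tfrac{\mu}{\lambda}\theta
\]
by an angle-comparison argument (using $\angle(Y,Z),\angle(\tilde Y,\tilde Z)\le N_1\theta$ together with $\angle(Y,\tilde Y)+\angle(Z,\tilde Z)\ge C_0\theta$), ruling out cancellation; here $\lambda^{-2N}$ decay suffices precisely because $\mu\le\lambda^2$. Your ``position versus direction'' dichotomy is not needed in this scheme and does not substitute for it: the essential case analysis is in the size relation between $\mu$ and $\lambda$, not in the components of $\nu-\tilde\nu$.
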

\begin{proof}
To get $O_N(\mu^{-N})$ decay as claimed, we need to split into two cases depending on the size of $\mu$. Assume first $\mu\ge \lambda^2$.

It suffices to consider the kernel 
\[K(y,z,\tilde y,\tilde z)=\int \chi_\lambda Q_\theta^\nu (x,y)\,\chi_\mu P_\theta^{\upsilon} (x,z)\,\overline{\chi_\lambda Q_\theta^{\tilde\nu} (x,\tilde y)}\,\overline{\chi_\mu P_\theta^{\tilde\upsilon} (x,\tilde z)}dx.\]

Indeed, by Lemma \ref{lemma:Kernels}, up to a $O_N(\mu^{-N})$ error, we can restrict the domain of integration here to $\Omega=T_{C\theta}(\gamma_{\upsilon})\cap T_{C\theta}(\gamma_{\tilde\upsilon}).$ 

Plugging \eqref{eq:osc-P} into the expression of $K(y,z,\tilde y,\tilde z)$, we get

\[K(y,z,\tilde y,\tilde z)=\mu\int_\Omega b(x,y, z, \tilde y, \tilde z)e^{i\mu (d_g(x,z)-d_g(x,\tilde z))}dx+O_N(\mu^{-N}),\]

where 
\[b(x,y, z, \tilde y, \tilde z)=\chi_\lambda Q_\theta^\nu (x,y)\,\overline{\chi_\lambda Q_\theta^{\tilde\nu} (x,\tilde y)}\,b_{\upsilon,\theta}(x,z)\,\overline {b_{\tilde\upsilon,\theta}(x,\tilde z)}.\]
It is easy to see that $b(x,y, z, \tilde y, \tilde z)$ satisfies
\[|\nabla^\alpha_x b(x,y, z, \tilde y, \tilde z)|\le C\lambda^{|\alpha|+1}.\]

Now we consider the phase function
\[\mu(d_g(x,z)-d_g(x,\tilde z)).\]
The gradient reads
\[\mu\nabla_x(d_g(x,z)-d_g(x,\tilde z)).\]
We claim that for $C_0$ big enough, there exists some $c_0>0$, such that
\[|\nabla_x(d_g(x,z)-d_g(x,\tilde z))|\ge c_0\theta,\]
then our lemma follows from simple integration by parts argument.

Indeed, since $x\in T_{C\theta}(\gamma_{\upsilon})\cap T_{C\theta}(\gamma_{\tilde\upsilon})$, $z\in T_{C\theta}(\gamma_{\upsilon})$ and $\tilde z\in T_{C\theta}(\gamma_{\tilde\upsilon})$, we see that \[|\nabla_x(d_g(x,z)-d_g(x,\tilde z))|\gtrsim |\upsilon-\tilde\upsilon|\theta,\]
noticing that \[|\upsilon-\tilde\upsilon|\ge|\nu-\tilde\nu|-|\nu-\upsilon|-|\tilde \nu-\tilde\upsilon|\ge|\nu-\tilde\nu|-2N_1,\]
thus for $C_0$ big enough,
\[|\upsilon-\tilde\upsilon|\ge\frac{1}{2}(|\upsilon-\tilde\upsilon|+|\nu-\tilde\nu|)-N_1\ge \frac{1}{2}C_0-N_1\ge c_0,\]
finishes the proof for the case $\mu\ge\lambda^2$.

Now we assume $\mu\le\lambda^2$, then again by Lemma \ref{lemma:Kernels}, up to a $O_N(\mu^{-N})=O_{2N}(\lambda^{-2N})$ error, we can further restrict the domain of integration in this case to $\Omega'=T_{C\theta}(\gamma_{\upsilon})\cap T_{C\theta}(\gamma_{\tilde\upsilon})\cap T_{C\theta}(\gamma_{\nu})\cap T_{C\theta}(\gamma_{\tilde\nu}).$

Similarly as above, by plugging \eqref{eq:osc-Q} and \eqref{eq:osc-P} into the expression of $K(y,z,\tilde y,\tilde z)$, we see that the resulting phase function is given by
\[\lambda(d_g(x,y)-d_g(x, \tilde y))+\mu(d_g(x,z)-d_g(x,\tilde z)).\]
The gradient reads
\[\lambda\nabla_x(d_g(x,y)-d_g(x,\tilde y))+\mu\nabla_x(d_g(x,z)-d_g(x,\tilde z)).\]
Let us denote $\nabla_x(d_g(x,y))=Y$, here $Y$ is a unit vector in $T_x M$, similarly denote $\nabla_x(d_g(x,\tilde y))=\widetilde Y$, $\nabla_x(d_g(x,z))=Z$ and $\nabla_x(d_g(x,\tilde z))=\widetilde Z$. By the separation conditions we have, it is easy to see that $\angle(Y,Z),\,\angle(\widetilde Y,\widetilde Z)\le N_1\theta$ and $\angle(Y,\widetilde Y)+\angle(Z,\widetilde Z)\ge C_0\theta$.

We Claim that
\begin{equation}
\label{angle separation}
\left|Y-\widetilde Y+\frac \mu \ld\big(Z-\widetilde Z\big)\right|=\left|\big(Y+\frac \mu\ld Z\big)-\big(\widetilde Y+\frac\mu\ld \widetilde Z\big)\right|\ge c\frac\mu\ld\theta,
\end{equation}
which implies the desired result using integration by parts. Indeed, it suffices to show that $\angle(Y+\frac\mu\ld Z,\widetilde Y+\frac\mu\ld \widetilde Z)$ is bounded below by some uniform constant times $\theta$. Note that $\angle(Y+\frac\mu\ld Z,Y),\,\angle(\widetilde Y,\widetilde Y+\frac\mu\ld \widetilde Z)\, ,\angle(Y+\frac\mu\ld Z,Z),\,\angle(\widetilde Z,\widetilde Y+\frac\mu\ld \widetilde Z)\le N_1\theta$, we have
\[\angle(Y+\frac\mu\ld Z,\widetilde Y+\frac\mu\ld \widetilde Z)\ge \angle(Y,\widetilde Y)-2N_1\theta,\]
similarly,
\[\angle(Y+\frac\mu\ld Z,\widetilde Y+\frac\mu\ld \widetilde Z)\ge \angle(Z,\widetilde Z)-2N_1\theta.\]
Thus for $C_0$ large enough,
\[\angle(Y+\frac\mu\ld Z,\widetilde Y+\frac\mu\ld \widetilde Z)\ge \frac 1 2(\angle(Y,\widetilde Y)+\angle(Z,\widetilde Z))-2N_1\theta\ge \frac 1 2 C_0\theta-2N_1\theta\ge c_0\theta,\]
finishes the proof.
\end{proof}

%
%
%
%
\section{Proof of the main theorem I: Orthogonality}
In this section, we use orthogonality argument to
reduce the proof of Theorem \ref{theorem:main} to
a specific bilinear estimate.
We use Lemma \ref{lemma:first microlocalization}
and Minkowski's inequality to estimate $\|\chi_\ld f\chi_\mu g\|_2$ by
\begin{align}
   \Bigl\|\sum_{|\nu-\upsilon|\leq M}&
   \chi_\ld Q^{\nu}_{\theta_0}f\;\;\chi_\mu P^{\upsilon}_{\theta_0} g\Bigr\|_2\label{eq:diagonal}\\
  +  \sum_{\ell=\log M/\log 2}^{O(\log \ld)}&
 \Bigl\|\sum_{2^\ell\leq|\nu-\upsilon|< 2^{\ell+1}}\chi_\ld Q^{\nu}_{\theta_0}f\;\;
 \chi_\mu P^{\upsilon}_{\theta_0} g\Bigr\|_2,
  \label{eq:off diagonal}
\end{align}
for certain dyadic $M$ large enough.
The square of \eqref{eq:diagonal} is estimated by
\begin{align}
 \label{eq:split}
 \biggl[\sum_{|\nu-\nu'|+|\upsilon'-\upsilon|\leq C_0}
   +\sum_{|\nu-\nu'|+|\upsilon'-\upsilon|\geq C_0}\biggr]
   \int \chi_\ld Q^{\nu}_{\theta_0}f(x)\;\chi_\mu P^{\upsilon}_{\theta_0} g(x)\;
  \overline{ \chi_\ld Q^{\nu'}_{\theta_0}f(x)}\;
  \overline{\chi_\mu P^{\upsilon'}_{\theta_0} g(x)}\;dx,
\end{align}
where  $|\nu-\upsilon|,|\nu'-\upsilon'|\leq M$.\medskip

By Lemma \ref{lemma:orthogonality}, the second term of \eqref{eq:split} is negligible by choosing $C_0$ 
sufficiently large.
\medskip

We can estimate the contribution of the first term as
\[\sum_{\upsilon\in\Z^2}\sum_{\nu:|\nu-\upsilon|\leq M}
\bigl\|\chi_\ld Q^{\nu}_{\theta_0}f\;\chi_\mu P^{\upsilon}_{\theta_0} g\bigr\|^2_2.\]
If we use the bilinear estimate \eqref{eq:BGT estimate-1},
we can estimate this sum by
\begin{align*}
  \ld^{\frac12}\sum_{\upsilon\in\Z^2}\bigl\|P^{\upsilon}_{\theta_0} g\bigr\|^2_2
  \sum_{\nu:|\nu-\upsilon|\leq M}\bigl\|Q^{\nu}_{\theta_0}f\bigr\|^2_2.
\end{align*}
By the $L^2$-orthogonality, we see the contribution of \eqref{eq:diagonal} is
\[\ld^{\frac{\eps_0}{2}}\|g\|_2\times\Bigl( \ld^{\frac12-\eps_0}\sup_\nu\|Q^\nu_{\theta_0}f\|^2_2\Bigr)^\frac12,\]
which corresponds to \eqref{eq:main result1}. Similarly, since the sum is symmetric, we can also bound \eqref{eq:diagonal} by
 \[\ld^{\frac{\eps_0}{2}}\|f\|_2\times\Bigl( \ld^{\frac12-\eps_0}\sup_\nu\|P^\nu_{\theta_0}g\|^2_2\Bigr)^\frac12,\]
which corresponds to \eqref{eq:main result2}.

{\bf The second microlocalization}.
For the off diagonal part \eqref{eq:off diagonal},
we will reduce the matters to
a bilinear oscillatory integrals as in \cite{Blair Sogge:refine}.
Fixing $\ell\geq \log M/\log 2$,
we see that if $2^{\ell}\leq|\nu-\upsilon|< 2^{\ell+1}$,
then the distance between $\gamma_{\nu}$ and
$\gamma_{\upsilon}$ in the sense of
Remark \ref{rmk} is approximately $2^\ell \theta_0$.
To explore this and use orthogonality argument,
one naturally employs wider tubes to
collect thinner tubes by making use of  the second mircolocalization.
Precisely, up to some negligible terms,
we may write for  $\theta_\ell=2^\ell\theta_0$
with $c_0$ to be specified later
\[\chi_\ld Q^{\nu}_{\theta_0}\,f(x)\approx
  \sum_{\sigma_1\in\Z^2}\bigl(\chi_\ld Q^{\sigma_1}_{c_0\theta_\ell}\bigr)\circ Q^{\nu}_{\theta_0}\,f(x),\;
  \chi_\mu P^{\upsilon}_{\theta_0}\,g(x)\approx
  \sum_{\sigma_2\in\Z^2}\bigl(\chi_\mu P^{\sigma_2}_{c_0\theta_\ell} \bigr)\circ P^{\upsilon}_{\theta_0}\,g(x).\]
Noting that the kernels of the operators $(\chi_\ld Q^{\sigma_1}_{c_0\theta_\ell})\circ Q^{\nu}_{\theta_0}$ and
$(\chi_\mu P^{\sigma_2}_{c_0\theta_\ell})\circ P^{\upsilon}_{\theta_0}$
decrease rapidly unless
$T_{C_1c_{0}\theta_\ell}(\gamma_{\sigma_1})\cap T_{C_1\theta_0}(\gamma_\nu)\neq\emptyset$
and  $T_{C_1c_{0}\theta_\ell}(\gamma_{\sigma_2})\cap T_{C_1\theta_0}(\gamma_\upsilon)\neq\emptyset$,
we have by choosing $M$ large enough,
there are $N_0=N_0(c_0,M)$ and $N_1$
such that up to some negligible terms
\begin{align}
  &\sum_{2^\ell\leq |\nu-\upsilon|< 2^{\ell+1}}\chi_\ld Q^{\nu}_{\theta_0}f(x)\;\chi_\mu P^{\upsilon}_{\theta_0}g(x)\label{eq:bi}\\
  =\sum_{\sigma_1,\,\sigma_2\,\in\,\Z^2,\, N_0\leq|\sigma_1-\sigma_2|\leq N_1}&\;\sum_{2^\ell\leq|\nu-\upsilon|< 2^{\ell+1}}
    \bigl(\chi_\ld Q^{\sigma_1}_{c_0\theta_\ell} \bigr)\circ Q^{\nu}_{\theta_0}f(x)\;
    \bigl(\chi_\mu P^{\sigma_2}_{c_0\theta_\ell}\bigr)\circ P^{\upsilon}_{\theta_0}g(x)\nonumber.
\end{align}
Moreover, we may find a $C_3>0$ having the property that
for every $\sigma_1$ and $\sigma_2$, there are
$\nu(\sigma_1)$ and $\upsilon(\sigma_2)$
such that $|\nu-\nu(\sigma_1)|,|\upsilon-\upsilon(\sigma_2)|\geq C_3 2^\ell$
implies
\[\bigl\|\bigl(\chi_\ld Q^{\sigma_1}_{c_0\theta_\ell}\bigr)\circ Q^{\nu}_{\theta_0}f\bigr\|_{L^\infty}
\lesssim_N \ld^{-N},\;
\bigl\|\bigl(\chi_\mu P^{\sigma_2}_{c_0\theta_\ell}\bigr)\circ P^{\upsilon}_{\theta_0}g\bigr\|_{L^\infty}\lesssim_N\mu^{-N}.\]
for all $N=1,2,\ldots$.
Therefore, we may estimate \eqref{eq:bi} as follows
\begin{align*}
    \Bigl\|\sum_{2^\ell\leq|\nu-\upsilon|< 2^{\ell+1}}
    &\chi_\ld Q^{\nu}_{\theta_0}f\;\chi_\mu P^{\upsilon}_{\theta_0}g\Bigr\|^2_2\\
    \lesssim\sum_{\substack{N_0\leq|\sigma_1-\sigma_2|\leq N_1\\|\sigma_1-\tilde\sigma_1|+|\sigma_2-\tilde\sigma_2|\leq C}}
    &\int  T^{\sigma_1,\,\sigma_2}_{\ld,\,\mu,\,\theta_\ell} F(x)\;
   \overline{T^{\tilde\sigma_1,\,\tilde\sigma_2}_{\ld,\,\mu,\,\theta_\ell} F(x)} \, dx \\
   &+\sum_{\substack{N_0\leq|\sigma_1-\sigma_2|\leq N_1\\|\sigma_1-\tilde\sigma_1|+|\sigma_2-\tilde\sigma_2|\geq C}}
    \int T^{\sigma_1,\,\sigma_2}_{\ld,\,\mu,\,\theta_\ell} F(x)\;
   \overline{T^{\tilde\sigma_1,\,\tilde\sigma_2}_{\ld,\,\mu,\,\theta_\ell} F(x) } \, dx,
\end{align*}
 where $N_0$ can be sufficiently large by choosing $c_0$ small and
 \begin{equation}
 \label{eq:bilinear osc-integral}
   T^{\sigma_1,\,\sigma_2}_{\ld,\,\mu,\,\theta_\ell} F(x)=\iint \bigl(\chi_\ld\circ Q^{\sigma_1}_{c_0\theta_\ell}\bigr)(x,y)\,
   \bigl( \chi_\mu\circ P^{\sigma_2}_{c_0\theta_\ell}\bigr)(x,z)\,F(y,z)\,dydz,
 \end{equation}
  \begin{equation}
   F(y,z)=\sum_{\substack{2^{\ell}\leq|\nu-\upsilon|<2^{\ell+1}
            \\|\nu(\sigma_1)-\nu|+|\upsilon(\sigma_2)-\upsilon|\leq C_{3}2^\ell}}\,
   Q^{\nu}_{\theta_0}\,f(y)\;P^{\upsilon}_{\theta_0}\,g(z),
 \end{equation}
 with $F(y,z)=0$ if $(y,z)\not\in B(y_0,C_0\delta)\times B(z_0, C_0\delta)$.
 It follows again from Lemma~\ref{lemma:orthogonality} that if we choose $C$ large enough, the second term in the expression preceding
 \eqref{eq:bilinear osc-integral} is negligible.
\medskip

To evaluate the first term there, we are reduced to estimating
 \begin{align}
   \sum_{N_0\leq |\sigma_1-\sigma_2|\leq N_1}\bigl\|T^{\sigma_1,\,\sigma_2}_{\ld,\,\mu,\,\theta_\ell}\,F\bigr\|^2_{L^2(B(0,\;\delta))}.
 \end{align}
 We shall need the following proposition whose proof is postponed to the next section.
 \begin{proposition}
   \label{prop:bilinear estimate}
Let
\begin{equation}
   T^{\sigma_1,\,\sigma_2}_{\ld,\,\mu,\,\theta} F(x)=\iint \bigl(\chi_\ld\circ Q^{\sigma_1}_{c_0\theta}\bigr)(x,y)\,
   \bigl( \chi_\mu\circ P^{\sigma_2}_{c_0\theta}\bigr)(x,z)\,F(y,z)\,dydz.
 \end{equation}
Assume as before that $\delta>0$ is sufficiently small and $\theta$ is larger than a fixed positive constant times $\theta_0$. Then if $N_0$ is suffciently large and $N_1>N_0$ is fixed, there exists a positive constant $C=C_{\eps_0}$ such that
   \begin{equation}
     \label{eq:bilinear osc-estimate}
     \bigl\|T^{\sigma_1,\,\sigma_2}_{\ld,\,\mu,\,\theta}F\bigr\|_{L^2(B(0,\delta))}\leq C\,\theta^{-1/2}\|F\|_2,\ \ \ {\rm\it if}\ N_0\le|\sigma_1-\sigma_2|\le N_1.
   \end{equation}
 \end{proposition}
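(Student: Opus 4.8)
The plan is to deduce Proposition~\ref{prop:bilinear estimate} from a transverse bilinear version of Hörmander's oscillatory integral theorem (the one recorded in Section~5, following \cite{BurqGerardTzvetkov:Invention}) by an anisotropic rescaling in the direction transverse to the two almost-parallel geodesics $\gamma_{\sigma_1}$ and $\gamma_{\sigma_2}$, under which the $\theta$-thin, $\theta$-separated configuration is dilated to the unit scale and the frequencies $\lambda\le\mu$ become $\tilde\lambda:=\lambda\theta^2\le\tilde\mu:=\mu\theta^2$.

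First I would invoke Lemma~\ref{lemma:Kernels} to replace the kernels of $\chi_\lambda\circ Q^{\sigma_1}_{c_0\theta}$ and $\chi_\mu\circ P^{\sigma_2}_{c_0\theta}$ by their leading terms \eqref{eq:osc-Q}, \eqref{eq:osc-P}; the $O_N(\lambda^{-N})$ and $O_N(\mu^{-N})$ remainders produce negligible contributions after multiplying and integrating in $x$ over $B(0,\delta)$. This realizes $T^{\sigma_1,\sigma_2}_{\lambda,\mu,\theta}F$, up to such errors, as the bilinear oscillatory integral operator
\[
\lambda^{\frac12}\mu^{\frac12}\iint e^{i(\lambda d_\g(x,y)+\mu d_\g(x,z))}\,a_{\sigma_1,c_0\theta}(x,y)\,b_{\sigma_2,c_0\theta}(x,z)\,F(y,z)\,dy\,dz,
\]
whose amplitude is, modulo rapidly decaying tails, supported in $x,y\in T_{C\theta}(\gamma_{\sigma_1})$ and $x,z\in T_{C\theta}(\gamma_{\sigma_2})$, and which by \eqref{eq:a b estimate}--\eqref{eq:b estimate along curve} loses only a factor $\theta^{-1}$ per transverse derivative while staying $O(1)$ in derivatives along the geodesics.

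Next I would work in Fermi coordinates about $\gamma_{\sigma_1}$, so that $\gamma_{\sigma_1}=\{x_1=0\}$. Since $|\sigma_1-\sigma_2|\le N_1$ is fixed, the geodesic $\gamma_{\sigma_2}$ makes an angle $\approx c_0\theta|\sigma_1-\sigma_2|$ with $\gamma_{\sigma_1}$ and so stays inside an $O(\theta)$-tube about it over the relevant unit-length window; by the Jacobi (geodesic) equation in Fermi coordinates this forces its graph $x_1=\beta(x_2)$ to satisfy $|\beta|+|\beta'|+|\beta''|\lesssim\theta$, with $\beta'(0)\approx c_0\theta|\sigma_1-\sigma_2|$. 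I would then perform the dilation $x_1\mapsto\theta\hat x_1$, $y_1\mapsto\theta\hat y_1$, $z_1\mapsto\theta\hat z_1$ in the transverse variables only (the longitudinal variables already live on the fixed scale $\delta$). Three things happen at once: the supports and the amplitude (by the transverse-derivative bound) are carried to the unit scale with uniformly bounded derivatives; using Lemma~\ref{lemma:distance function} the phase $\lambda d_\g(x,y)$ becomes $\lambda\cdot(\text{affine in }\hat x_2,\hat y_2)+\tilde\lambda\,\hat\phi_1(\hat x,\hat y;\theta)$, the affine part being an innocuous modulation (a unimodular factor $e^{-i(\lambda+\mu)\hat x_2}$ outside the operator and factors $e^{i\lambda\hat y_2}$, $e^{i\mu\hat z_2}$ absorbed into $F$ without changing $\|F\|_2$) and $\hat\phi_1(\cdot;\theta)$, together with $\hat\phi_2(\cdot;\theta)$, belonging to a bounded family of Carleson--Sjölin phases; and, crucially, the $O(\theta)$-angular separation of the two geodesics is magnified to the fixed value $\approx c_0|\sigma_1-\sigma_2|\in[c_0N_0,c_0N_1]$, which---on choosing $c_0N_0$ bounded below and $N_0$ large enough to dominate the $O(\theta)$ corrections---makes $\tilde\lambda\hat\phi_1$, $\tilde\mu\hat\phi_2$ satisfy, uniformly in $\theta\in[\theta_0,1]$, both the curvature and the transversality conditions of Lemma~\ref{lem:BGT lemma}. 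Note also $\tilde\lambda=\lambda\theta^2\ge\lambda^{2\eps_0}\to\infty$, so the rescaled frequencies are genuinely large (which is why the constant $C_{\eps_0}$ depends on $\eps_0$).

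Granting the bilinear oscillatory integral theorem of Section~5, which under these hypotheses yields $\|\widetilde T\widetilde F\|_{L^2}\lesssim(\tilde\lambda\tilde\mu)^{-1/2}\|\widetilde F\|_2$ for the normalized rescaled operator $\widetilde T$, one then undoes the three dilations, each of which contributes a Jacobian power of $\theta$; using $(\tilde\lambda\tilde\mu)^{-1/2}=(\lambda\mu)^{-1/2}\theta^{-2}$ and $\|\widetilde F\|_2=\theta^{-1}\|F\|_2$ this gives
\[
\bigl\|T^{\sigma_1,\sigma_2}_{\lambda,\mu,\theta}F\bigr\|_{L^2(B(0,\delta))}\ \lesssim\ \theta^{\frac12}\cdot\lambda^{\frac12}\mu^{\frac12}\theta^{2}\cdot(\tilde\lambda\tilde\mu)^{-\frac12}\cdot\theta^{-1}\,\|F\|_2\ =\ \theta^{-\frac12}\|F\|_2,
\]
which is \eqref{eq:bilinear osc-estimate}. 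The step I expect to be the main obstacle is the third one: one must check that the anisotropic dilation is genuinely harmless---that $\gamma_{\sigma_2}$, the supports, and the amplitude all stay on the unit scale (here the fixed small $\delta$ and the confinement of $\gamma_{\sigma_2}$ to an $O(\theta)$-tube, forcing its Fermi-coordinate curvature to be $O(\theta)$ rather than $O(1)$, are decisive) and that the dilated phases keep obeying the two quantitative inequalities of Lemma~\ref{lem:BGT lemma} uniformly for $\theta\in[\theta_0,1]$, so that one application of the bilinear Hörmander theorem covers the whole range of scales; at the top end $\theta\approx1$ this simply reduces to the transverse case of the Burq--G\'erard--Tzvetkov estimate \eqref{eq:BGT estimate-1}, with no rescaling.
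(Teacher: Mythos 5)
Your rescaling argument is a genuinely different route from the one the paper takes, and the scaling arithmetic does come out right: with $dy_1\,dz_1=\theta^2\,d\hat y_1\,d\hat z_1$, $dx_1=\theta\,d\hat x_1$, $\|\hat F\|_2=\theta^{-1}\|F\|_2$, and the prefactor $\lambda^{1/2}\mu^{1/2}$, the bound $\|\widetilde T\widetilde F\|_2\lesssim(\tilde\lambda\tilde\mu)^{-1/2}\|\widetilde F\|_2$ with $\tilde\lambda=\lambda\theta^2$, $\tilde\mu=\mu\theta^2$ indeed produces $\theta^{-1/2}\|F\|_2$, and the separation $|y_1-z_1|\gtrsim\theta$ inherited from $N_0\le|\sigma_1-\sigma_2|\le N_1$ (Remark~\ref{remark:Separation}) becomes a unit-scale separation $|\hat y_1-\hat z_1|\gtrsim 1$ after dilation, which is exactly what a transverse bilinear estimate wants. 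This is a legitimate way to see why $\theta^{-1/2}$ is the right power. The paper, by contrast, never rescales: it applies Cauchy--Schwarz to freeze $y_2,z_2$, writes out the fourfold kernel, and performs the specific Burq--G\'erard--Tzvetkov change of variables $(y_1,z_1)\mapsto(\tau,\tau')$ with $\tau=\tfrac{\lambda}{2\mu}(y_1-z_1)^2$, $\tau'=z_1+\tfrac{\lambda}{\mu}y_1$, whose Jacobian $(1+\epsilon)(2\epsilon\tau)^{1/2}$ together with the constraint $\tau\gtrsim(\lambda/\mu)\theta^2$ delivers the $\theta^{-1}$ in the Schur bound directly, after integrating by parts in $x$ using the Carleson--Sj\"olin condition \eqref{eq:Carlesion-Sjolin}. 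The paper's route is more pedestrian but makes every $\theta$-dependence visible at each step and avoids any uniformity-in-$\theta$ bookkeeping.

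There are, however, two real gaps in your argument. First, the ``bilinear oscillatory integral theorem of Section~5'' that you invoke does not exist as a citable standalone statement: Section~5 \emph{is} the proof of the present proposition, carried out via the change-of-variables computation just described. What you would actually need is a unit-scale bilinear H\"ormander estimate for a general function $F(y,z)$ (not just products $f\otimes g$), for a $\theta$-indexed family of Carleson--Sj\"olin phases, with a constant uniform over the family. Neither this paper nor \cite{BurqGerardTzvetkov:Invention} states such a lemma in the form you need; the closest thing, Proposition~\ref{prop:reduction}, is formulated for products, at a single unrescaled phase, and with the separation hypothesis phrased in angular coordinates about a fixed center. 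Proving your black-box lemma would amount to redoing the Schur-test/change-of-variables argument anyway, so the rescaling reorganizes the work rather than avoiding it. Second, your claim that the rescaled phases $\hat\phi_1(\cdot;\theta)$, $\hat\phi_2(\cdot;\theta)$ lie in a bounded family satisfying the determinant conditions of Lemma~\ref{lem:BGT lemma} \emph{uniformly} for $\theta\in[\theta_0,1]$ is plausible (I agree it reduces to the $O(\theta)$-confinement of $\gamma_{\sigma_2}$ forcing $O(\theta)$ Fermi-coordinate curvature, plus the absorption of the linear-in-$x_2$ piece into a unimodular factor, plus the tameness of the higher-order corrections which after dilation are $O(\theta^2)$ relative to the quadratic term), but it is exactly the point you flag as ``the main obstacle,'' and it is left unverified. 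So the proposal is a correct scheme and correctly isolates where the work lies, but it is not a complete proof, and it is not the proof the paper gives.
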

Assuming \eqref{eq:bilinear osc-estimate}, we can now complete the proof of Theorem \ref{theorem:main}.
In fact, we have
 \begin{align*}
  & \Bigl\|\sum_{2^\ell\leq|\nu-\upsilon|< 2^{\ell+1}}
   \chi_\ld Q^{\nu}_{\theta_0}f\;\chi_\mu P^{\upsilon}_{\theta_0}g\Bigr\|^2_2\\
   \leq\,C\,(2^\ell\theta_0)^{-1}\sum_{N_0\leq|\sigma_1-\sigma_2|\leq N_1}&
            \iint\Bigl|\sum_{\substack{2^{\ell}\leq|\nu-\upsilon|<2^{\ell+1}
            \\|\nu(\sigma_1)-\nu|+|\upsilon(\sigma_2)-\upsilon|\leq C_{3}2^\ell}}
            Q^{\nu}_{\theta_0}\,f(y)\,
            P^{\upsilon}_{\theta_0}\,g(z)\,\Bigr|^2dydz.
 \end{align*}
 Notice that
 \begin{align*}
 &\iint\Bigl|\sum_{\substack{2^{\ell}\leq|\nu-\upsilon|<2^{\ell+1}
            \\|\nu(\sigma_1)-\nu|+|\upsilon(\sigma_2)-\upsilon|\leq C_{3}2^\ell}}
            Q^{\nu}_{\theta_0}\,f(y)\,
            P^{\upsilon}_{\theta_0}\,g(z)\,\Bigr|^2dydz\\
 =\sum_{\substack{2^{\ell}\leq|\nu-\upsilon|<2^{\ell+1}
            \\|\nu(\sigma_1)-\nu|+|\upsilon(\sigma_2)-\upsilon|\leq C_{3}2^\ell}}
           & \sum_{\substack{2^{\ell}\leq|\nu'-\upsilon'|<2^{\ell+1}
            \\|\nu(\sigma_1)-\nu'|+|\upsilon(\sigma_2)-\upsilon'|\leq C_{3}2^\ell}}
            \Bigl\langle\bigl( Q^{\nu'}_{\theta_{0}}\bigr)^{*}\circ Q^{\nu}_{\theta_{0}}\,f,\,f\Bigr\rangle \;
             \Bigl\langle \bigl(P^{\upsilon'}_{\theta_{0}}\bigr)^{*}\circ P^{\upsilon}_{\theta_{0}}\,g,\,g\Bigr\rangle,
 \end{align*}
where
$\|\bigl( Q^{\nu'}_{\theta_{0}}\bigr)^{*}\circ Q^{\nu}_{\theta_{0}}\|_{L^{2}\rightarrow L^{2}}=O(\lambda^{-N})$
and
$\|\bigl(P^{\upsilon'}_{\theta_{0}}\bigr)^{*}\circ P^{\upsilon}_{\theta_{0}}\|_{L^{2}\rightarrow L^{2}}= O(\mu^{-N})$
if $|\nu-\nu'|+|\upsilon-\upsilon'|\geq C$ for $C$ large.
Consequently, we have up to some negligible terms
\begin{align*}
\|\eqref{eq:bi}\|_{2}^{2}\leq &C\, (2^{\ell}\theta_{0})^{-1}\sum_{N_{0}\leq |\sigma_{1}-\sigma_{2}|\leq N_{1}}
     \sum_{|\nu-\nu(\sigma_{1})|+|\upsilon-\upsilon(\sigma_{2})|\leq C_{3}\,2^{\ell}}\|Q^{\nu}_{\theta_{0}}\,f\|_{2}^{2}
     \|P^{\upsilon}_{\theta_{0}}g\|_{2}^{2}\\
     \leq &C\,(2^{\ell}\theta_{0})^{-1}\biggl(\sup_{\sigma_{1}}\sum_{|\nu-\nu(\sigma_{1})|\leq C_{3}\,2^{\ell}}
     \|Q^{\nu}_{\theta_{0}}f\|_{2}^{2}\biggr)
     \cdot\biggl(\sum_{\sigma_{2}}\sum_{|\upsilon-\upsilon(\sigma_{2})|\leq C_{3}\,2^{\ell}}
     \|P^{\upsilon}_{\theta_{0}}g\|_{2}^{2}\biggr)\\
     \leq& C\,(2^{\ell}\theta_{0})^{-1}\,\|g\|_2^2\;\sup_{\nu\in\Z^2}\|Q^\nu_{2^\ell\theta_0}f\|_2^2.
\end{align*}
Thanks to the fact that we are allowed to have an extra small power of $\lambda$, we may sum over $1\lesssim\ell\lesssim \log \lambda$ to finish the proof of \eqref{eq:main result1}. To get \eqref{eq:main result2}, one notes that the above sum is again symmetric, thus we may interchange the role of $Q^{\nu}_{\theta_0}f$ and $P^{\upsilon}_{\theta_0}g$ to get 
\[\Big\|\sum_{2^\ell\leq |\nu-\upsilon|< 2^{\ell+1}}\chi_\ld Q^{\nu}_{\theta_0}f(x)\;\chi_\mu P^{\upsilon}_{\theta_0}g(x)\Big\|_{2}^{2}\leq C\,(2^{\ell}\theta_{0})^{-1}\,\|f\|_2^2\;\sup_{\nu\in\Z^2}\|P^\nu_{2^\ell\theta_0}g\|_2^2,\]
summing over $\ell$ finishes the proof of \eqref{eq:main result2}.
%
%

\section{Proof of the main theorem II:  bilinear oscillatory integral estimates}
In this section, we take $\theta=c_0\theta_\ell$ and prove Proposition \ref{prop:bilinear estimate}. We work in the geodesic normal coordinates about a fixed point 
$\tilde x\in T_{C\theta}(\gamma_{\sigma_1})\cap T_{C\theta}(\gamma_{\sigma_2})$. Without loss of generality, we may assume $\tilde x\in \gamma_{\sigma_1}$ and the geodesic $\gamma_{\sigma_1}$ is parameterized by $\{(0,s):|s|\leq 2\}$.
In the following, we denote by $\phi(x,y)=d_\g\bigl((x_1,x_2),(y_1,y_2)\bigr)$
the geodesic distance between $x$ and $y$.

In order to estimate the $L^2(B(0,\delta))$ norm of
\begin{equation} \label{eq:bilinear osc-integral'}   T^{\sigma_1,\,\sigma_2}_{\ld,\,\mu,\,\theta} F(x)=\iint \bigl(\chi_\ld\circ Q^{\sigma_1}_{\theta}\bigr)(x,y)\,
   \bigl( \chi_\mu\circ P^{\sigma_2}_{\theta}\bigr)(x,z)\,F(y,z)\,dydz,\end{equation}
we shall need the following lemma to further restrict the domain of $x,y,z$.
\begin{lemma} There exists a constant $C$, such that if we set $\Omega_1=T_{C\theta}(\gamma_{\sigma_1})$ and $\Omega_2=T_{C\theta}(\gamma_{\sigma_2})$we have
\[ \left\|\iint_{y\not\in\Omega_1} \bigl(\chi_\ld\circ Q^{\sigma_1}_{\theta}\bigr)(x,y)\,
   \bigl( \chi_\mu\, P^{\sigma_2}_{\theta}\bigr)(x,z)\,F(y,z)\,dydz\right\|_{L^2(B(0,\delta))}\le C_N \lambda^{-N}\|f\|_{2}\|g\|_{2},\]
and
\[ \bigl\|T^{\sigma_1,\,\sigma_2}_{\ld,\,\mu,\,\theta}F\bigr\|_{L^2(B(0,\delta)\setminus\Omega_1)}\le C_N \lambda^{-N}\|f\|_{2}\|g\|_{2}.\]
Similarly, we have
\[ \left\|\iint_{z\not\in\Omega_2} \bigl(\chi_\ld\circ Q^{\sigma_1}_{\theta}\bigr)(x,y)\,
   \bigl( \chi_\mu\, P^{\sigma_2}_{\theta}\bigr)(x,z)\,F(y,z)\,dydz\right\|_{L^2(B(0,\delta))}\le C_N \mu^{-N}\|f\|_{2}\|g\|_{2},\]
and
\[ \bigl\|T^{\sigma_1,\,\sigma_2}_{\ld,\,\mu,\,\theta}F\bigr\|_{L^2(B(0,\delta)\setminus\Omega_2)}\le C_N \mu^{-N}\|f\|_{2}\|g\|_{2}.\]\end{lemma}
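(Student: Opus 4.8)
The plan is to read all four bounds off the pointwise kernel estimates \eqref{eq:kernels decay Q}--\eqref{eq:kernels decay P} of Lemma~\ref{lemma:Kernels}, the only genuine issue being to organize the argument so that the (a priori unbounded) ratio $\mu/\ld$ never enters. Write $K_1(x,y)=(\chi_\ld\circ Q^{\sigma_1}_{\theta})(x,y)$ and $K_2(x,z)=(\chi_\mu\circ P^{\sigma_2}_{\theta})(x,z)$, and note that after the reductions of Section~3 all these kernels live on a fixed compact set: $x\in B(0,\delta)$, $y\in\mathrm{supp}\,\alpha_1$, $z\in\mathrm{supp}\,\alpha_2$. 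On either of the sets $\{y\notin\Omega_1\}$ or $\{x\notin\Omega_1\}$, estimate \eqref{eq:kernels decay Q} gives $K_1=O_N(\ld^{-N})$ pointwise for every $N$; since this occurs on a bounded domain, the crude $L^2\to L^\infty$ form of Schur's test shows that the operator $\mathcal K_1^\flat$ obtained by restricting $K_1$ to that set obeys $\|\mathcal K_1^\flat\|_{L^2\to L^\infty}\le C_N\ld^{-N}$; symmetrically \eqref{eq:kernels decay P} yields $\|\mathcal K_2^\flat\|_{L^2\to L^\infty}\le C_N\mu^{-N}$ for the analogous truncations of $K_2$.

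For the first two inequalities I would \emph{not} estimate $T^{\sigma_1,\sigma_2}_{\ld,\mu,\theta}$ through the pointwise size of the kernels, since $|K_2|\lesssim\mu^{1/2}$ and $\mu$ may be enormous compared with $\ld$; instead I exploit the tensor-sum form $F(y,z)=\sum_{(\nu,\upsilon)\in S}Q^{\nu}_{\theta_0}f(y)\,P^{\upsilon}_{\theta_0}g(z)$, where $S$ is the index set of \eqref{eq:bi} and, because $2^{\ell}$ is bounded by a fixed power of $\ld$, both $\#\{\upsilon:(\nu,\upsilon)\in S\}$ and $\#\{\nu:(\nu,\upsilon)\in S\}$ are bounded by a fixed power of $\ld$. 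Then the portion of $T^{\sigma_1,\sigma_2}_{\ld,\mu,\theta}F$ coming from $\{y\notin\Omega_1\}$ (and likewise the restriction of the output to $B(0,\delta)\setminus\Omega_1$, after first discarding the $\{y\notin\Omega_1\}$ part via the previous case) equals $\sum_{(\nu,\upsilon)\in S}\bigl[\mathcal K_1^\flat(Q^{\nu}_{\theta_0}f)\bigr]\cdot\bigl[(\chi_\mu\circ P^{\sigma_2}_{\theta})(P^{\upsilon}_{\theta_0}g)\bigr]$, and H\"older in $L^\infty\times L^2$ bounds the $L^2(B(0,\delta))$-norm of each summand by
\[
\bigl\|\mathcal K_1^\flat(Q^{\nu}_{\theta_0}f)\bigr\|_{L^\infty}\bigl\|(\chi_\mu\circ P^{\sigma_2}_{\theta})(P^{\upsilon}_{\theta_0}g)\bigr\|_{L^2}\le C_N\ld^{-N}\|Q^{\nu}_{\theta_0}f\|_{2}\cdot C\|P^{\upsilon}_{\theta_0}g\|_{2},
\]
where the second factor is handled using $\|\chi_\mu\|_{L^2\to L^2}=O(1)$ and the uniform $L^2$-boundedness of the order-zero operators $P^{\sigma_2}_{\theta}(x,D)$ from Lemma~\ref{lemma:first microlocalization}; the dangerous $\mu^{1/2}$ is gone. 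Summing over $S$ by Cauchy--Schwarz, invoking the almost-orthogonality $\sum_{\nu}\|Q^{\nu}_{\theta_0}f\|_{2}^{2}\lesssim\|f\|_{2}^{2}$ and $\sum_{\upsilon}\|P^{\upsilon}_{\theta_0}g\|_{2}^{2}\lesssim\|g\|_{2}^{2}$ furnished by Lemma~\ref{lemma:symbol class of P Q}, and absorbing the polynomial-in-$\ld$ factor from the two index counts into the arbitrary power $\ld^{-N}$, gives the asserted bound $C_N\ld^{-N}\|f\|_{2}\|g\|_{2}$.

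The third and fourth inequalities follow the same route with the two factors swapped: truncate $K_2$ to $\{z\notin\Omega_2\}$ or $\{x\notin\Omega_2\}$, gain $C_N\mu^{-N}$ from \eqref{eq:kernels decay P}, pair it in $L^\infty\times L^2$ against $(\chi_\ld\circ Q^{\sigma_1}_{\theta})(Q^{\nu}_{\theta_0}f)$, which is $O(1)$ in $L^2$, and sum as before, obtaining $C_N\mu^{-N}\|f\|_{2}\|g\|_{2}$. The whole argument is elementary once Lemma~\ref{lemma:Kernels} is available; the only step that needs care---and where a naive pointwise estimate of $K_2$ would cost a factor $\mu^{1/2}$ and break the claim when $\mu\gg\ld$---is to let the approximate spectral projector of the higher frequency act as an $L^2$-bounded operator rather than replacing it by its kernel.
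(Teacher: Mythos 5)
Your proof is correct and follows essentially the same route as the paper: reduce to the tensor-sum form (absorbing the polynomially many terms into the $\ld^{-N}$ or $\mu^{-N}$ decay), gain the rapid decay from the kernel estimates of Lemma~\ref{lemma:Kernels} on the truncated factor, and pair in $L^\infty\times L^2$ with the untruncated factor handled via the $L^2\to L^2$ boundedness of the corresponding spectral projector composed with an order-zero pseudodifferential operator. The point you flag as delicate---never invoking a pointwise $\mu^{1/2}$ bound on the high-frequency kernel, letting $\chi_\mu$ act as an $L^2$-bounded operator instead---is precisely how the paper handles it as well.
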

\begin{proof}
Since we know there are at most $O(\ld^2)$ many terms in the sum
\[
   F(y,z)=\sum_{\nu}\sum_{\upsilon:|\upsilon-\nu|\in[2^\ell,2^{\ell+1})}\,
   Q^{\nu}_{\theta_0}\,f(y)\;P^{\upsilon}_{\theta_0}\,g(z),
\]
it suffices to show the $L^2(B(0,\delta))$ norm of
\[\iint \bigl(\chi_\ld\circ Q^{\sigma_1}_{ \theta}\bigr)(x,y)\,
   \bigl( \chi_\mu\circ P^{\sigma_2}_{ \theta}\bigr)(x,z)\,f(y)\,g(z)\,dydz
\]
satisfies our claim.

Indeed, by Lemma \ref{lemma:Kernels}, we can find $C$ such that if $x\not\in T_{C\theta}(\gamma_{\sigma_1})$ or $y\not\in T_{C\theta}(\gamma_{\sigma_1}),$ 
  \[ |(\chi_\ld Q^\nu_\theta)(x,y)|\leq C_N \ld^{-N}.\]
Thus
\[\left\|\int \bigl(\chi_\ld\circ Q^{\sigma_1}_{ \theta}\bigr)(x,y)\,
  f(y)\,dy\right\|_{L^\infty(dx)}\le C_N \ld^{-N}\|f\|_{L^2},
\]
while we know $\chi_\mu$ has $L^2\rightarrow L^2$ norm 1, so
\[\left\|\int \bigl(\chi_\mu\circ P^{\sigma_2}_{ \theta}\bigr)(x,y)\,
  g(z)\,dz\right\|_{L^2(dx)}\le \|g\|_{L^2}.
\]
Therefore
\[\left\|\iint \bigl(\chi_\ld\circ Q^{\sigma_1}_{ \theta}\bigr)(x,y)\,
   \bigl( \chi_\mu\circ P^{\sigma_2}_{ \theta}\bigr)(x,z)\,f(y)\,g(z)\,dydz\right\|_{L^2}\le C_N \lambda^{-N}\|f\|_{L^2}\|g\|_{L^2}
\]
as claimed.

The second part of our lemma follows from the exact same proof.
\end{proof}

\begin{figure}[ht]
 \begin{center}
\includegraphics[width=10cm]{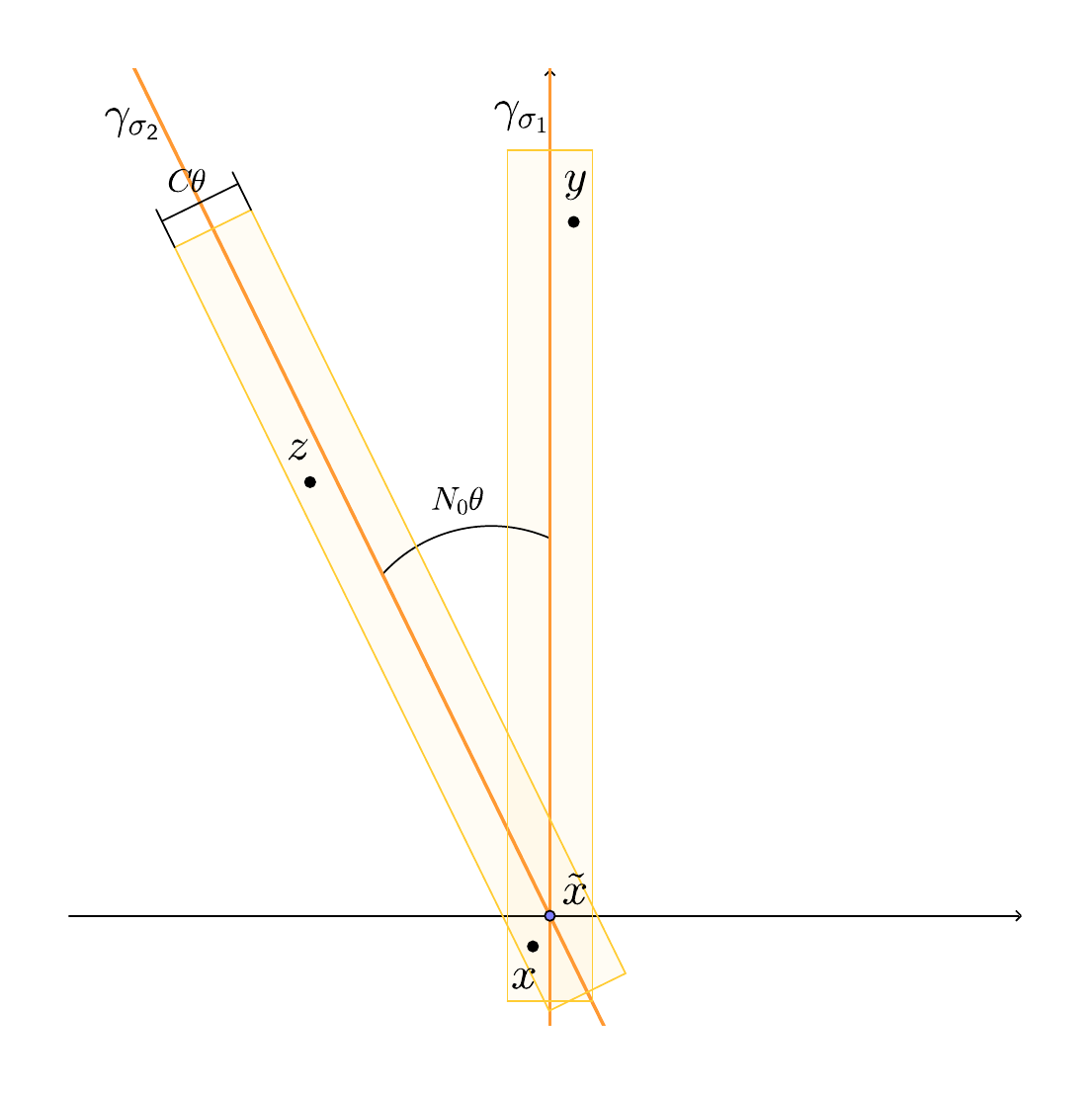}

\end{center}
\caption{}
\end{figure}
\begin{remark}\label{remark:Separation}By the above lemma, we see that we can assume in \eqref{eq:bilinear osc-integral'},
$y\in T_{C\theta}(\gamma_{\sigma_1})$,
$z\in T_{C\theta}(\gamma_{\sigma_2})$,
and $x\in T_{C\theta}(\gamma_{\sigma_1})\cap T_{C\theta}(\gamma_{\sigma_2})$.
Moreover,
if $N_0\leq |\sigma_1-\sigma_2|\leq N_1$, then
we may assume the angle $\text{Ang}(x;y,z)$ between the geodesic connecting $x$ and $y$ and the one connecting $x$ and $z$ belongs to $[\theta,\,\tilde C_4\theta]$. This geometric assumption yields 
$x,y,z\in T_{C_4\theta}(\gamma_{\sigma_1})$ for some large constant $C_4$. Moreover, we also have $\angle(\gamma_{\sigma_1},\gamma_{\sigma_2})\ge N_0\theta.$ Noticing that $d_g(x,y)$ and $d_g(x,z)$ are comparable to 1, we claim that for $N_0$ sufficiently large, we can find $c>0$ such that 
\begin{equation}\label{eq:1st separation}|y_1-z_1|>c\theta.\end{equation}
Indeed, it is easy to see that $|y_1|\le C\theta$ and $d_g(z,\gamma_{\sigma_2})\le C\theta$. Since the constant $C$ here is a uniform constant as in Lemma \ref{lemma:Kernels}, we can choose $N_0\gg C$. Then we have $|z_1|\ge N_0\theta-C\theta$, see Figure 1. Therefore $|y_1-z_1|\ge N_0\theta-2C\theta\ge c\theta$ as claimed.
\end{remark}
\medskip

Returning to $T^{\sigma_1,\sigma_2}_{\ld,\,\mu,\,\theta}F(x)$,
we have from Cauchy-Schwarz
\begin{multline*}
\bigl\|T^{\sigma_1,\sigma_2}_{\ld,\mu,\theta}F\bigr\|^2_2
\\
\lesssim\ld\,\mu\,\iint
\Bigl| \int e^{i\mu\Phi_\epsilon(x;\,(y_1,y_2),\,(z_1,z_2))}
a_{\lambda,\,\mu,\,\theta}^{\sigma_1,\sigma_2}(x,y,z)\,
F(y,\,z)dy_1dz_1\Bigr|^2dxdy_2dz_2,
\end{multline*}
where $\epsilon=\ld/\mu$ and
\begin{align*}
  \Phi_\epsilon(x;y,z)=&\epsilon\phi(x,y)+\phi(x,z),\\
a_{\ld,\,\mu,\,\theta}^{\sigma_1,\,\sigma_2}(x;y,z)=&a_{\sigma_1,\,
\theta}(x,y)\;b_{\sigma_2,\,\theta}(x,z).
\end{align*}
Fix $y_2$ and $z_2$, it suffices to prove
\begin{multline}\label{eq:c-s ed}
\int_{\mathbb R^2}
\Bigl| \int_{\R^{2}} e^{i\mu\Phi_\epsilon(x;\,(y_1,y_2),\,(z_1,z_2))}
a_{\lambda,\,\mu,\,\theta}^{\sigma_1,\sigma_2}(x,y,z)\,G(y_1,z_1)\,dy_1dz_1\Bigr|^2dx
\\
\leq\,C\, (\ld\mu\theta)^{-1}\|G\|_{L^2}^2,
\end{multline}
uniformly with respect to $y_2,z_2$ where we set $G(y_1,z_1)=F(y,z)$ for brevity.
\medskip

Squaring the left side of \eqref{eq:c-s ed} shows that we need to estimate
\begin{align}
  \iint e^{i\,\mu\, \Psi(x;\,y_1,\,y_1',\,z_1,\,z_1')}
  A_{\ld,\,\mu,\,\theta}^{\sigma_1,\,\sigma_2}(x;\,y_1,\,y_1',\,z_1,\,z_1')\,
    G(y_1,z_1)\overline{G(y'_1,z'_1)}\,dxdy_1dz_1dy'_1dz'_1,\label{eq:off diag}
\end{align}
where
\begin{align*}
  &A_{\ld,\,\mu,\,\theta}^{\sigma_1,\,\sigma_2}(x;\,y_1,\,y_1',\,z_1,\,z_1')
  =a_{\ld,\,\mu,\,\theta}^{\sigma_1,\,\sigma_2}(x,\,(y_1,y_{2}),\,(z_1,z_{2}))\,
  \overline{a_{\ld,\,\mu,\,\theta}^{\sigma_1,\,\sigma_2}(x,\,(y'_1,y_{2}),\,(z'_1,z_{2}))},\\
 & \Psi=\Psi_{\epsilon,\,y_2,\,z_2}(x;\,y_1,y_1',z_1,z_1')
  =\Phi_\epsilon(x;\,(y_1,y_2),(z_1,z_2))-\Phi_\epsilon(x;\,(y'_1,y_2),(z'_1,z_2)).
\end{align*}
Set
\beq\label{eq:osckernel}
K_{\ld,\,\mu,\,\theta}^{\sigma_1,\,\sigma_2}(y_1,y_1';z_1,z_1')
=\int_{\R^2}e^{i\,\mu \,\Psi(x;\,y_1,y_1',z_1,z_1')}A_{\ld,\,\mu,\,\theta}^{\sigma_1,\,\sigma_2}(x;\, y_1,\,y_1',\,z_1,\,z_1')\,dx.
\eeq
Then by Schur test, we are reduced to proving
\begin{multline*}
\sup_{y_1',\,z'_1}\int_{\R^2}
\bigl|K_{\ld,\,\mu,\,\theta}^{\sigma_1,\,\sigma_2}(y_1,y_1';z_1,z_1')\bigr|\,dy_1dz_1,\;
\sup_{y_1,\,z_1}\int_{\R^2}
\bigl|K_{\ld,\,\mu,\,\theta}^{\sigma_1,\,\sigma_2}(y_1,y_1';z_1,z_1')\bigr|\,dy_1'dz_1'
\\
\leq C/\ld\mu\theta.
\end{multline*}
By symmetry, we shall only deal with the first one.

By Remark \ref{remark:Separation}, we have

\beq\label{eq:t-region}
|y_1-z_1|\ge c\,\theta,\,|y'_1-z'_1|\ge c\,\theta.
\eeq
This would allow us to study the oscillatory integral \eqref{eq:osckernel}
using the strategy of \cite{Sogge:Tohoku} and a
change of  variables argument similar to the one in p. 217-218 of \cite{BurqGerardTzvetkov:Invention}.
In fact, if we let $\psi(x,y_1)=\phi(x,(y_1,y_2))$, then $\psi$ is a Carleson-Sj\"olin phase for fixed $y_2$, i.e.
\begin{equation}\label{eq:Carlesion-Sjolin}
{\rm det}
  \begin{pmatrix}
      \psi''_{x_1y_1} & \psi''_{x_2y_1} \\
      \psi'''_{x_1y_1y_1} & \psi'''_{x_2y_1y_1} \\
   \end{pmatrix}
 \neq 0,
\end{equation}
see \cite{Sogge:93book, Sogge:Tohoku}.
Changing variables  $(y_1,z_1)\mapsto (\tau,\tau')$, $(y_1',z_1')\mapsto (\titau,\titau')$ by
$$
\begin{cases}
\tau=&\frac{\ld}{2\mu}(y_1-z_1)^2\\
\tau'=&z_1+\frac\ld\mu y_1
\end{cases}
,\;
\begin{cases}
\titau=&\frac{\ld}{2\mu}(y_1'-z_1')^2\\
\titau'=&z_1'+\frac\ld\mu y'_1
\end{cases},
$$
where we may assume $y_{1}>z_{1}$ by symmetry. It is clear that
 the above bijective mapping sends variables
from $\{y_{1}-z_{1}\ge c\theta\}$ to
$\{(\tau,\tau'):\tau\ge c\lambda\theta^{2}/2\mu\}$,
whose Jacobian reads
\[\frac{D(\tau,\,\tau')}{D(y_{1},\,z_{1})}=(1+\epsilon)(2\epsilon\tau)^{1/2}.\]
The phase function in \eqref{eq:osckernel} goes to
\[\widetilde\Psi (x;\, \tau,\,\tilde\tau,\,\tau',\,\tilde\tau')=\Psi(x;\,y_{1},\,y_{1}',\,z_{1},\,z_{1}'),\]
under the change of variables. The Carleson-Sj\"olin condition allows us to
obtain as in  \cite{BurqGerardTzvetkov:Invention}
\[|\nabla_{x}\widetilde\Psi(x;\,\tau,\,\tilde\tau,\,\tau',\,\tilde\tau')|\approx |\tau-\tilde\tau|+|\tau'-\tilde\tau'|,\]
\[|\partial^{\alpha}_{x}\widetilde\Psi(x;\,\tau,\,\tilde\tau,\,\tau',\,\tilde\tau')|\le\,C_{\alpha}
( |\tau-\tilde\tau|+|\tau'-\tilde\tau'|),\,|\alpha|\leq 5.\]
In view of integration by parts and relation \eqref{eq:t-region},
we have  for fixed $(y_1',z_1')$ hence fixed $(\titau,\titau')$, thus
\begin{align*}
&\iint_{y_1-z_1\ge c\theta}\bigl|K_{\ld,\,\mu,\,\theta}^{\sigma_1,\,\sigma_2}(y_1,y_1';z_1,z_1')\bigr|\;dy_1dz_1\\
\le&C\iint_{\tau\ge c\ld\theta^2/2\mu}(1+\mu|\tau-\titau|+\mu |\tau'-\titau'|)^{-5}\;\Bigl(\frac\ld\mu \tau\Bigr)^{-1/2}\;d\tau d\tau'\\
\le& C\,/\ld\mu\theta,
\end{align*}
finishes the proof.

\begin{remark}
As mentioned before, it would be interesting to see that if one could get rid of the $\eps$-loss  that appears in Theorem \ref{theorem:main}. In the earlier work \cite{Sogge:Tohoku} of the second author on the linear case,  there is no $\eps$-loss in his result, while the power of $\vertiii{e_\ld}_{KN(\ld)}$ is less favorable. Thus it is natural to consider that if one could apply the strategies presented in \cite{Sogge:Tohoku} to get a $\eps$-loss free version of Theorem \ref{theorem:main}. However, 
it seems more difficult to use the microlocal decomposition if we want to get rid of the $\eps$. Without the help of microlocal techniques, the separation of $(y_0,\delta)$ and $(z_0,\delta)$ in the radial direction becomes problematic, and it seems difficult to get around by simply applying ideas in \cite{Sogge:Tohoku}.
\end{remark}

\end{document}